\theoremstyle{plain}
\def\Legenda014{\scalerel*{\includegraphics{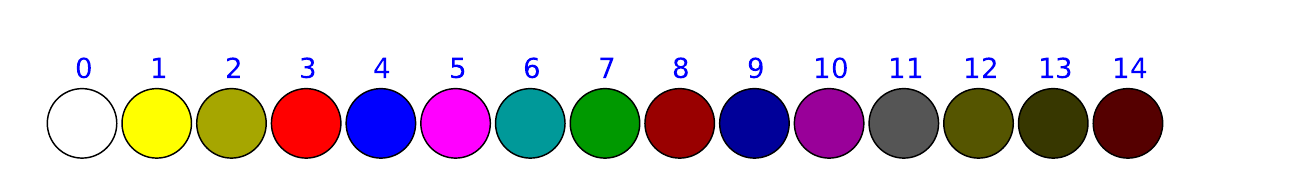}}{X\rule[-0.81ex]{0pt}{22pt}}}
\def\Legend_0-10{\scalerel*{\includegraphics{Legend_0-10.pdf}}{X\rule[-.6ex]{0pt}{1pt}}}
\def\LEGEND_0-7{\scalerel*{\includegraphics{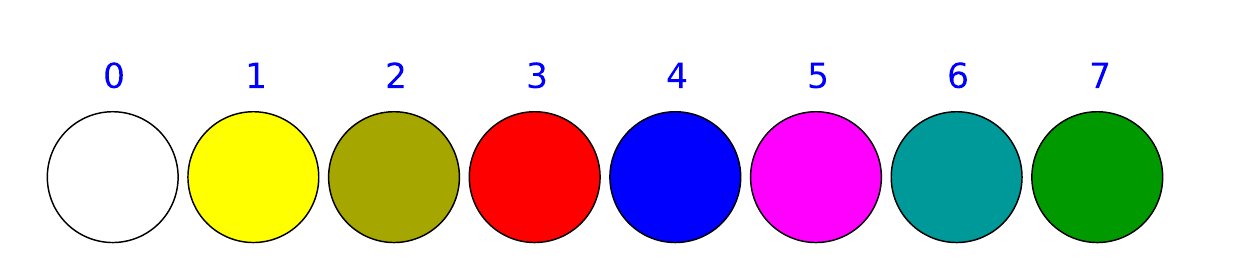}}{X\rule[-1.2ex]{0pt}{26pt}}}
\def\Legend_0-3{\scalerel*{\includegraphics{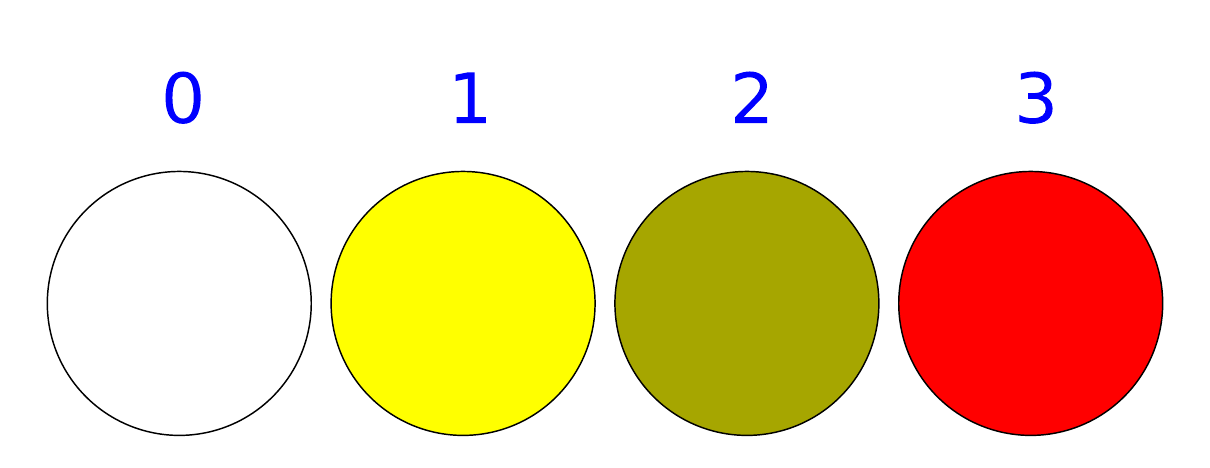}}{X\rule[-1.2ex]{0pt}{20pt}}}
\def\DiagramaComutativa{\scalerel*{\includegraphics{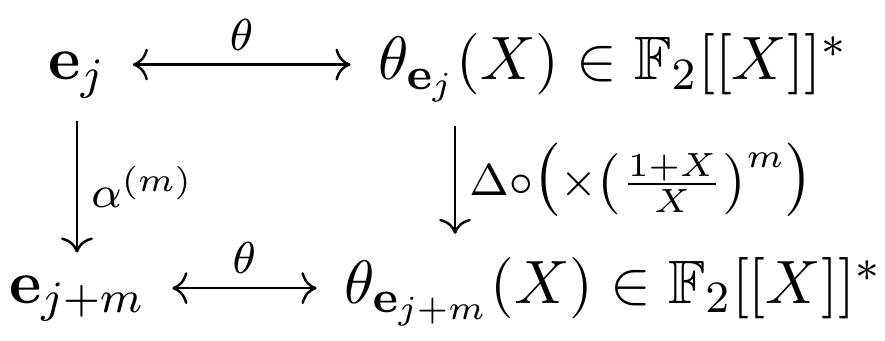}}{X\rule[
11ex ] { -333pt } {
20pt } } }
\newcommand{\cA}{\mathcal A}
\newcommand{\cS}{\mathcal S}
\newcommand{\cV}{\mathcal V}
\newcommand{\fS}{\mathfrak S}
\newcommand{\bu}{\mathbf u}
\newcommand{\bw}{\mathbf w}
\newcommand{\be}{\mathbf e}
\newcommand{\fp}{\mathfrak{p}}
\newcommand{\fq}{\mathfrak{q}}
\newcommand{\FF}{\mathbb F}
\newcommand{\NN}{\mathbb N}
\newcommand{\NNs}{{\mathbb{N}^*}}
\newcommand{\PP}{\mathbb P}
\newcommand{\ZZ}{\mathbb Z}
\newcommand{\Nwords}{\mathbb{N}^{+}}
\newcommand{\concat}{\mathbin{+\mkern-10mu+}}
\newcommand{\TNNsdoi}{T_{\NNs\mkern-3mu, \mkern1mu 2}}
\newcommand{\mycdots}{\!\!\!\!\cdot\!\cdot\!\cdot\!\!\!\!}
\newcommand{\Gin}{{\mathcal{G}_{in}}}
\newcommand\rs[1]{\mbox{{\color{red}\footnotesize #1}}} 
\newcommand\w[1]{\makebox[1.05em]{$#1$}} 
\newtheorem{theorem}{Theorem}
\newtheorem{lemma}{Lemma}
\newtheorem{corollary}{Corollary}
\newtheorem{conjecture}{Conjecture}
\newtheorem{proposition}{Proposition}
\newtheorem{paradoxproblem}{Paradox Problem}
\theoremstyle{remark}
\theoremstyle{plain}
\newcommand{\ind}{\operatorname{ind}}
\numberwithin{equation}{section}
\title{A growth model based on the arithmetic $Z$-game}
\author{Cristian Cobeli, Mihai Prunescu, Alexandru Zaharescu}
\address{Cristian Cobeli, 
\textit{Simion Stoilow}, Institute of Mathematics of the Romanian Academy,
21 Calea Grivi\c tei Street, 010702;
P.O. Box 1-764, 014700, Bucharest, Romania}
\email{cristian.cobeli@imar.ro}
\address{Mihai Prunescu, 
\textit{Simion Stoilow}, Institute of Mathematics of the Romanian Academy,
21 Calea Grivi\c tei Street, 010702;
P.O. Box 1-764, 014700, Bucharest, Romania}
\email{mihai.prunescu@imar.ro}
\address{Alexandru Zaharescu,
Department of mathematics, University of Illinois, 1409 West Green Street, Urbana ,
IL 61801, USA, and 
\textit{Simion Stoilow}, Institute of Mathematics of the Romanian Academy,
21 Calea Grivi\c tei Street, 010702;
P.O. Box 1-764, 014700, Bucharest, Romania}
\email{zaharesc@illinois.edu}
\subjclass[2010]{
Primary 
37B15, 
Secondary
68Q80, 
82C22, 
82C20, 
60G18.
}
\keywords{Absolute differences, cellular automata, growth model, recurrent sequences, self-similar 
processes, dynamic lattice systems, solitons.}
\begin{document}

\begin{abstract}
We present an evolutionary self-governing model based on the numerical atomic rule
$Z(a,b)=ab/\gcd(a,b)^2$, for $a,b$ positive integers. 
Starting with a sequence of numbers, the initial generation $\Gin$, a new sequence 
is obtained by applying the $Z$-rule to any neighbor terms. Likewise, applying repeatedly the same 
procedure to the newest generation, an entire matrix $T_\Gin$ is 
generated. 
Most often, this matrix, which is the recorder of the whole process, shows a fractal aspect and has
intriguing properties.

If $\Gin$ is the sequence of positive integers, in the associated matrix remarkable are the 
distinguished geometrical figures called the 
$Z$-solitons and the sinuous evolution of the size of numbers on the western edge. 
We observe that $T_\NNs$
is close to the analogue free of solitons matrix generated from an initial generation in which 
each natural number is replaced by its largest divisor that is a product of distinct primes. We 
describe the shape and the properties of this new matrix.

N. J. A. Sloane raised a few interesting problems regarding the western edge of the matrix 
$T_\NNs$.
We solve one of them and present arguments for a precise conjecture on another.

\end{abstract}

\maketitle

\section{Introduction Story}
Many different mathematical models have been proposed to study an evolutionary self-governing 
system. 
In the last several decades, a particular attention was devoted to those that are based on simple 
generating rules that produce complex outcomes. Such an example is the growing model based on the 
numerical $Z$-rule introduced 
in~\cite{CZ2013}
\begin{equation}\label{eqZrule}
	\begin{split}
	Z(a,b)=\frac{ab}{\gcd(a,b)^2}\,,\quad\text{ $a,b\in\NNs$,}
	\end{split}
\end{equation}
where $\NNs:=\NN\setminus\{0\}$.
The numbers are recorded in cells and, for simplicity, we keep the unidirectional development of 
future generations, composed of children 
$Z(a,b)$ born from parents $a$ and $b$, which are neighbor cells in the previous generation. 

For a plastic representation of the $Z$-rule~\eqref{eqZrule}, one can think that any cell 
containing a positive integer $n$ is a citadel composed of towers. There are as many towers in 
the citadel as prime factors $n$ has.
Each tower is associated to a prime and the height of the tower corresponding to a prime 
$p$ that divides  $n$  equals the power of $p$ in the factor decomposition of $n$. 
In particular, the citadel of a cell containing the number $1$ has no towers at all. 
Likewise, one may think that the citadel $n$ has towers associated to the primes that do not divide 
$n$ also, but these towers have zero height.
Then, the $Z$-rule topples the towers of the neighbor citadels $a$ and $b$ creating a new citadel 
$Z(a,b)$ in the next generation. The towers of the 
new citadel have heights equal with the absolute difference of the heights of towers corresponding 
to the same prime in $a$ and $b$ and, if a prime divides only one of $a$ and $b$, then this tower 
is reproduced unchanged in the new citadel.

The process starts with a sequence of numbers $\Gin$, which may be finite or not, which are placed 
in a row of cells.
This sequence is called the initial generation and the $Z$-rule is applied sequentially on 
each two consecutive terms of  $\Gin$. Whence, a new generation is born and its cells are placed in 
the following row. 
Usually, in graphic representations, we slightly shift to the right the new generation such that 
any new cell is placed in the middle under its parents.
Repeating the process, we obtain a matrix $T_{\Gin}$ with infinitely 
many rows if $\Gin$ is infinite. As the reproducing rule remains unchanged, the results depend 
only on the initial 
generations and we shall see that in this way a large variety of outcomes are produced.

The matrices of numbers $T_{\Gin}$ have lots of features of which some are 
similar to the objects created by the abelian sandpile model proposed by 
Bak, Tang and Wiesenfeld~\cite{BTW1987}. The intensely studied model, also called the 
chip-firing game, was surveyed by Levine and Propp~\cite{LPro2010}.
Our $Z$-model also captures features of other evolutionary systems such as the Ducci-type 
game~\cite{CM1937}, \cite{CCZ2000}, \cite{CGVZ2002},  \cite{CT2004}, \cite{GVZ2005},  
\cite{CZZ2011}, \cite{HKSW2014}, \cite{BGS2015},
 the numerical 
ensembles created by median insertions, such as those related to  Pascal triangle~ \cite{Gra1992}
\cite{Gra1997}, \cite{Pru2011a}, \cite{Pru2011b}, \cite{Pru2012}, \cite{Pru2013}, \cite{CZ2013}
or the Farey sequences~\cite{CZ2003}, \cite{CZ2006}.
For many initial generations $\Gin$, the matrices $T_{\Gin}$ show complex self-similar structures,
like those of some particular abelian sandpile states 
\cite{LKG1990},\cite{BR2002}, \cite{CPS2008},\cite{SD2010}
or the outcomes produced in the related rotor-router model~\cite{PDK1996}, \cite{LP2008}, 
\cite{Pro2010}. 

A special feature of matrices $T_\Gin$ is the fact that they can be localized. For any prime $p$, 
the \textit{$p$-tomography} is the matrix of citadels of $T_\Gin$ in which all towers, 
except the towers associated to $p$, are deleted. Then $T_{\Gin}$ is the superposition (the 
element-wise multiplication) of the $p$-tomographies for all primes $p$, since the evolution 
according to the $Z$-rule is independent to one another.

We have already proved in~\cite{CZ2014} that the $Z$-rule produces objects with a fractal aspect if 
$\Gin$ is the sequence of prime numbers or its localized slice, the
sequence of zeros except one term that is equal to a prime $p$.
In this article we show that this also happens if the initial generation is the $p$-spaced sequence 
$\cA_p=\{\fp_n\}_{n\ge 1}$, where 
\begin{equation}\label{eqAP1}
	\begin{split}
     \fp_n=\begin{cases}
              p \quad&\text{if $p\mid n$,}\\
              0 \quad&\text{else,}
         \end{cases}	
	\end{split}
\end{equation}
or $\cV_p=\{\fq_n\}_{n\ge 1}$, the $p$-section of positive integers,
\begin{equation}\label{eqAP2}
	\begin{split}
     \fq_n=\begin{cases}
              p^{v_p(n)} \quad&\text{if $p\mid n$,}\\
              0 \quad&\text{else,}
         \end{cases}	
	\end{split}
\end{equation}
where $v_p(n)$ is the $p$\textit{-valuation} of $n$, which
is, the power of $p$ into the prime decomposition \mbox{of $n$}. 
The classic Sierpinski fractal appears if $p=2$ (a finite fragment is shown in 
Figure~\ref{FigTomo2}), while more complex self-similar 
patterns are typical for larger primes (see Figures~\ref{FigTomo13-19}--\ref{FigTomo5}).
Always, in a graphical representation, we present only a triangular region of $T_\Gin$, the one 
composed by the cells 
born in future generations from the part of $\Gin$ shown on the first row.

%
\begin{figure}[t]
\small\footnotesize
 \centering
 \vspace*{-1.4em} 
\begin{equation*}
\arraycolsep=2.4pt\def\arraystretch{1.12}
\begin{array}{ccccccccccccccccccccccccc}
\w{} & \w{}  &\w{}& \w{} & \w{}& \w{} & \w{}& \w{} &\w{}& \w{} &\w{}& \w{} & \w{}& \w{} & \w{}& 
\w{} 
& \w{}& \w{} & \w{} &\w{} & \w{}& \w{}
& \w{}\\
\textbf{1} & & 2 & & 3 & & 4 & & 5 & & 6&  & 7 & & 8 & & 9 & & 10 & & 11 & & 12 & & \cdots\\
& \textbf{2} & & 6 & & 12 & & 20 & & 30 & & 42 & & 56 & & 72 & & 90 & & 110 & & 132  & & \cdots& \\
& & \textbf{3} & & 2 & & 15& & 6 & & 35 & & 12 & & 63 & & 20 & & 99 & & 30 & & \cdots & & \\
& & & \textbf{6} & & 30& & 10 & & 210 & & 420 & & 84& & 1260 & & 1980 & & 330 & & \cdots &  & & \\
& & & & \textbf{5} & & 3 & & 21 & & 2 & & 5 & & 15 & & 77 & & 6 & & \cdots & & & & \\
& & & & & \textbf{15} & & 7 & & 42 & & 10& & 3 & & 1155 & & 462 & & \cdots & &  & & & \\
& & & & & & \textbf{105} & & 6 & & 105 & & 30 & & 385 & & 10 & & \cdots & &  & & & & \\
& & & & & & & \textbf{70} & & 70 & & 14 & & 462 & & 154 & & \cdots &  &  & & & & & \\
& & & & & & & & \textbf{1}& & 5 & & 33 & & 3 & & \cdots & & &  & & & & & \\
& & & & & & & & & \textbf{5}& & 165 & & 11 & & \cdots & & & & & & & & & \\
& & & & & & & & & & \textbf{33} & & 15 & & \cdots & & & & & & & & & & \\
& & & & & & & & & & & \textbf{55} & & \cdots & & & & & & & & & & & \\
& & & & & & & & & & &  & {\boldsymbol\cdots} &  & & & & & & & & & & & \\     
\end{array}
\end{equation*}
 \caption{The matrix $T_{\NNs}$. After the first generation, the rows are shifted to the 
right so that any child is placed in the middle, under its parents.}
 \label{FigureTNNs}
 \end{figure}

A fundamental problem raised by the $Z$-model concerns the shape and properties of the matrix grown 
from the first generation $\NNs$. Its north-west corner is shown in Figure~\ref{FigureTNNs}. The 
$p$-tomographies of $T_\NNs$ can be grown individually, by starting in 
the first row with the sequence $\cV_p$. Of particular interest are questions related to the 
geometrical figures propagated from the cell $p^g,\ g\ge 2$.  
For any prime $p$ and integer $g\ge 2$, we denote by $\fS(p,g)$ the collection of connected cells 
that starts from $p^g$ and contains only powers of $p$ larger than two. 
We call these figures \textit{$Z$-solitons} and present two of them in Figure~\ref{FigSolitons}.
Unlike the set of cells containing powers of $p$  less than two, which forms a continuous texture 
all over the infinite matrix $T_\NNs$, the solitons are larger and larger with the power $g$, but 
finite.

\begin{figure}[ht]
 \centering
 \mbox{
 \subfigure{
    \includegraphics[width=0.48\textwidth]{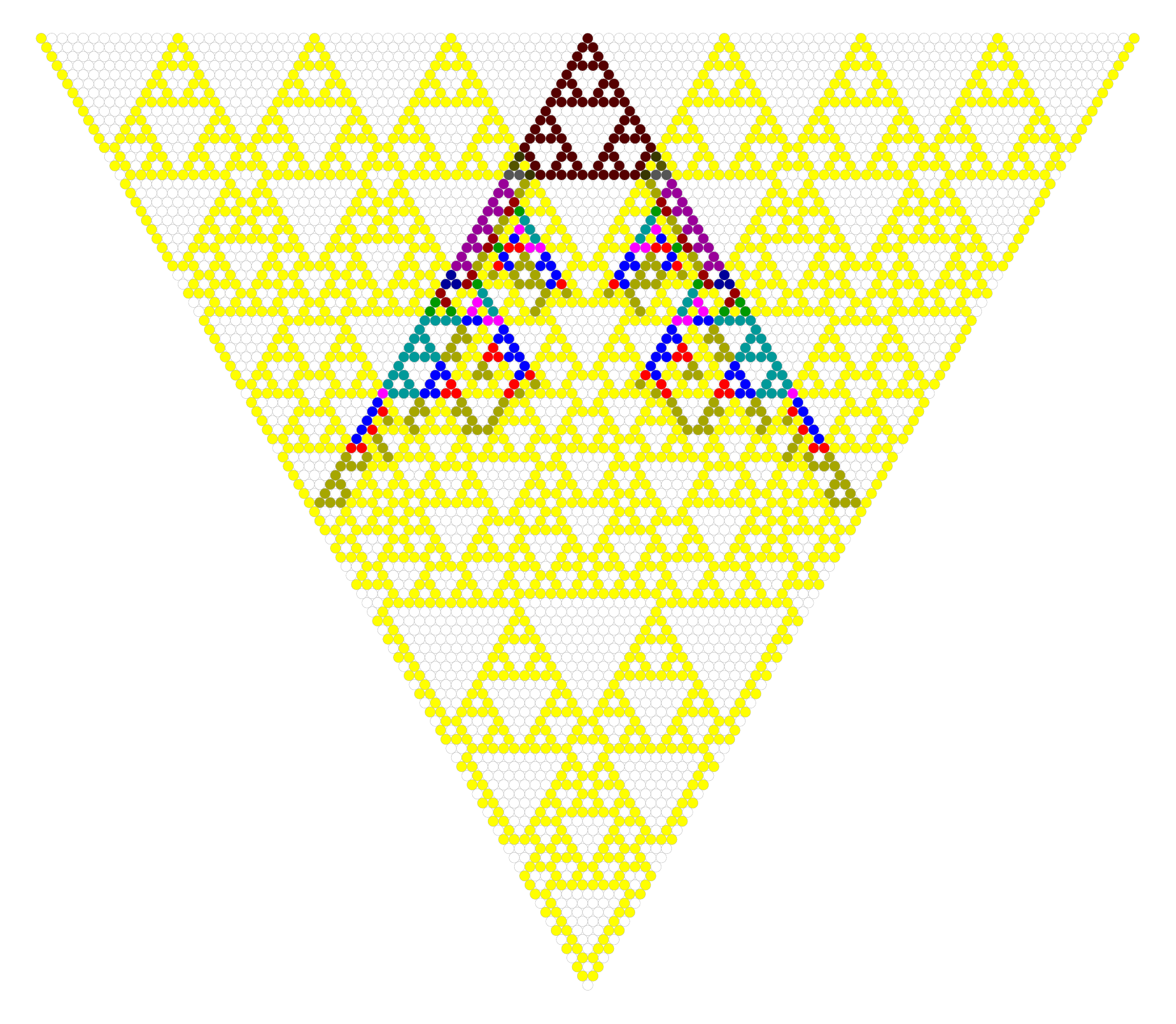}
 }
 \subfigure{
    \includegraphics[width=0.48\textwidth]{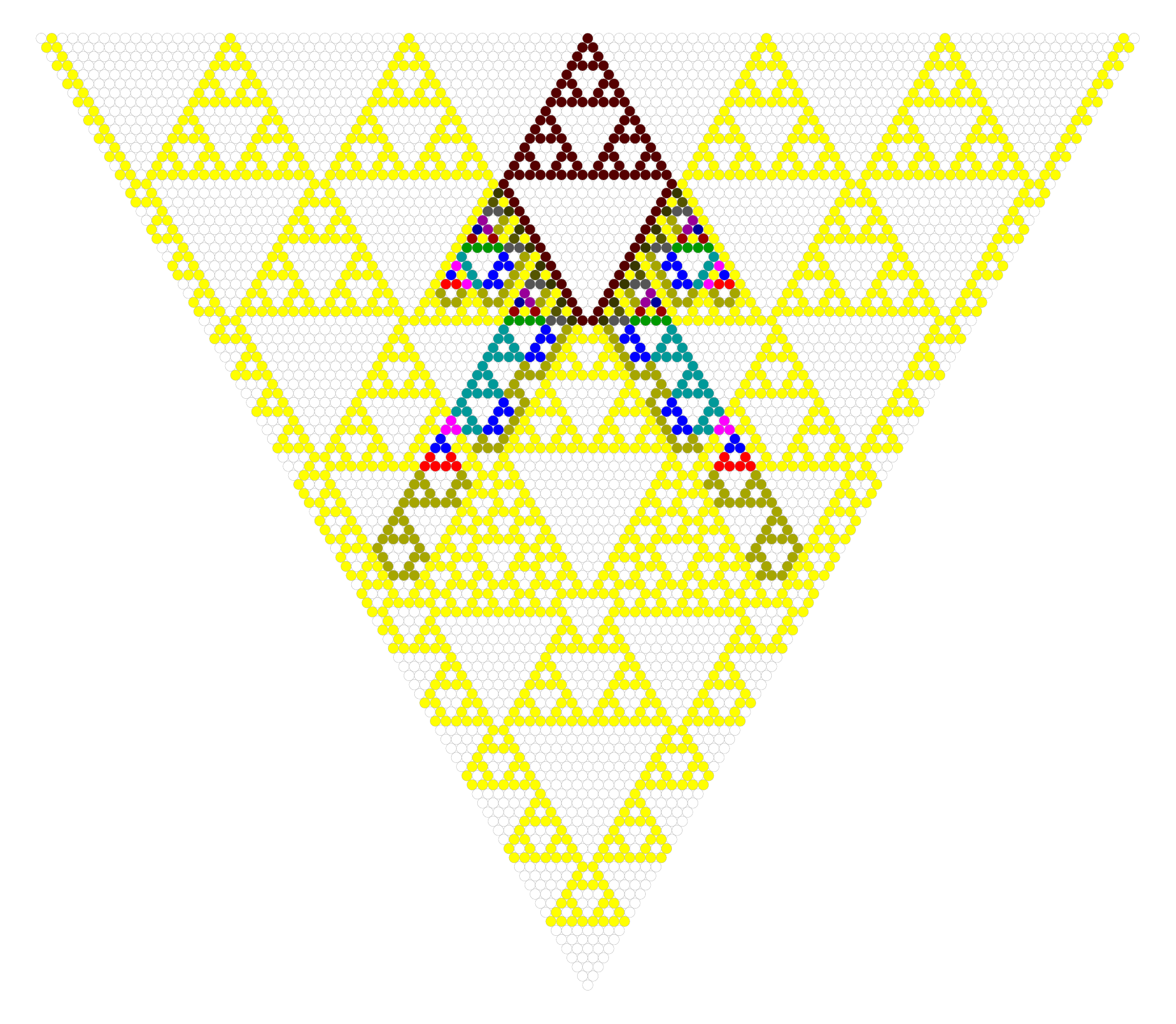}
 }
 }
\caption{The solitons $\fS(p,n)$ generated by the cells $p^{14}$ with $p=13$ and $p=17$.
 	  \vspace*{-1.11em}  \\
     	  \mbox{\small Code of colors for the cells containing the powers of $p$ from 
$0$ to $14$:\protect \Legenda014}			
}
 \label{FigSolitons}
 \end{figure}

\begin{conjecture}\label{ConjectureSolitons}
     Two distinct solitons $\fS(p,g_1)$, $\fS(p,g_2)$, neither overlap nor touch each other.
\end{conjecture}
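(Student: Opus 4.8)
The plan is to work one prime at a time. By the tomographic decomposition recalled above, $T_{\NNs}$ is the element-wise product of its $p$-tomographies, and each soliton $\fS(p,g)$ is created entirely inside the $p$-tomography; so it suffices to fix a prime $p$ and study the integer array $E$ given by $E(0,n)=v_p(n)$ and $E(i+1,j)=|E(i,j)-E(i,j+1)|$, since on a single tower the $Z$-rule is exactly $(\alpha,\beta)\mapsto|\alpha-\beta|$. In these terms a soliton is a connected component of $H=\{(i,j): E(i,j)\ge 2\}$, its seeds are the row-$0$ cells it contains, and these sit precisely at the multiples of $p^2$. Because distinct components already share no edge, the conjecture is the assertion that a full moat of background cells separates them; concretely, I must rule out every \emph{diagonal contact}, i.e. two cells of $H$ lying in different components but meeting at a corner, with the two intervening cells in $L=\{E\le 1\}$.

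Two elementary observations set the stage. First, for nonnegative integers $|\alpha-\beta|\ge 2$ forces $\max(\alpha,\beta)\ge 2$, so every cell of $H$ has a parent in $H$; following parents upward shows that each component of $H$ meets row $0$, whence every soliton owns at least one seed and, by the finiteness already asserted for solitons, is a finite island. Second, the complementary low region $L$ percolates---it contains the connected ``continuous texture'' of value-one cells recalled above---which is what makes the moat picture plausible. With these in hand the problem is purely local: forbid the checkerboard vertices described above.

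The engine I would use is self-similarity. For $p=2$ the identities $v_2(2m)=1+v_2(m)$ and $v_2(\mathrm{odd})=0$ propagate through the difference rule to give, for $i\ge 1$, the exact renormalization $E(2i,2j)=E(i,j)$, with the odd rows and odd columns merely duplicating these entries and the two top rows explicit. Thus, away from its seeding rows, $H$ is the exact $2\times$ magnification of a copy of itself, and each soliton is the blow-up of a smaller soliton capped by an explicit head near row $0$. I would then argue by descent on the row index of a putative diagonal contact: the magnification pulls such a contact back to one at roughly half the height---either a genuine diagonal contact of two smaller solitons, contradicting minimality, or a contact inside the explicit head, which is excluded by a direct check against the ruler pattern $v_2(n)$. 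The base cases are the height-$2$ seeds, whose solitons one computes by hand to be small isolated gaskets wrapped in $L$.

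The hard part is the general prime. For $p\ge 3$ the clean scaling $E(pi,pj)=E(i,j)$ \emph{fails}: the $p-1$ zeros separating consecutive spikes in row $0$ turn the scale-$p$ renormalization into a genuinely more intricate local rule, and this is exactly what produces the richer self-similar patterns seen for larger primes. So the main obstacle is to pin down the correct scale-$p$ recursion---the analogue of the Sierpinski step, presumably in the spirit of the fractal analysis of \cite{CZ2014}---and then to show that it still (i) carries solitons to solitons and (ii) never manufactures a checkerboard vertex along the seams where a tall soliton abuts the short solitons seeded at the adjacent multiples of $p^2$. It is along these seams, where the moat is thinnest, that I expect the real difficulty to lie; making the descent work there uniformly in $p$ is what would turn the conjecture into a theorem.
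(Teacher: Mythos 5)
You should first be aware that the statement you are proving is stated in the paper as a \emph{conjecture}: the authors do not prove it. The only case they settle is $p=2$, and they do so not by your renormalization--descent scheme but as a corollary of Theorem~\ref{TheoremTNNsdoi}, which gives a complete explicit description of $v_2(T_\NNs)$: the rows are grouped into slices $S_k$ of height $2^{k-1}$, each slice being a union of Pascal-mod-$2$ triangles $P_2(2^{k-1},t)$ whose weights run through $\beta(\bw_1)=1,1,2,2,1,1,3,3,1,1,2,2,\dots$ and whose bases are adjacent. Non-touching of distinct solitons $\fS(2,g_1)$, $\fS(2,g_2)$ then reads off from this list: the only triangles of weight $\ge 2$ that are adjacent are the two members of a pair of equal weight (which belong to the same soliton, the one seeded at the corresponding power of $2$), since the weight sequence always interposes a pair of $1$'s between consecutive pairs of values $\ge 2$. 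Your $p=2$ argument is a repackaging of the same self-similarity (your exact magnification $E(2i,2j)=E(i,j)$ is the scaling that underlies the doubling of the slices), but to close even this case you would still have to carry out the base-case and ``head'' verifications you defer to a direct check; the paper's explicit slice description does that work once and for all, so for $p=2$ the two routes are essentially equivalent in content.

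The genuine gap is exactly where you locate it: for odd $p$ you have no scale-$p$ recursion, hence no descent, hence no proof. You are candid that ``pinning down the correct scale-$p$ recursion'' and controlling the seams between a tall soliton and its short neighbours is the real difficulty, but naming the obstacle is not the same as overcoming it, and nothing in your sketch constrains what happens along those seams. This is precisely the point at which the authors stop as well -- they offer no argument for $p\ge 3$ and leave the statement as Conjecture~\ref{ConjectureSolitons}. So your proposal should be read as: a correct reduction to the $p$-tomography and to the exclusion of diagonal contacts, a viable (if less explicit) alternative derivation of the known case $p=2$, and an accurate diagnosis -- but not a resolution -- of the open case. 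One smaller caution: your claim that each soliton ``is a finite island'' is imported from the paper's assertion that solitons are finite, which is itself not proved there; if your descent were ever to rely on finiteness rather than just on the local checkerboard exclusion, that would be a second unproved input.
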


The characterization problem of the evolution of shape and size of the solitons for different 
$p$ and growing $g$  is the analog of the limiting shape problems 
studied in \cite{GQ2000}, \cite{FR2008}, \cite{LP2008}, \cite{LPer2010}, \cite{FR2011}.
The solitons $\fS(2,g)$ verify Conjecture~\ref{ConjectureSolitons}, as follows from their complete 
description presented in Section~\ref{SectionSoliton2}. 
For odd primes, 
$\fS(p,g)$ are more complex, with an aspect of certain fringed parts of Sierpinski triangles.
For a fixed prime $p$, the series of solitons $\fS(p,g)$, as $g$ increases, offers an intriguing 
spectacle and their growth appears to be proportional, as in the analog case of the abelian 
sandpiles~\cite{Ost2003}, \cite{DS2012}, \cite{DS2013}, \cite{DD2014}. 

\medskip
Another zone of interest is the first column or the \textit{west edge} of matrix $T_\Gin$. 
We denote this sequence by $W_\Gin$. The western 
edge can be viewed as the projection of $\Gin$ through the entire $Z$-process. 
Notice that the value of the $m$-th citadel on $W_\Gin$ is influenced by the values of all first 
$m$ citadels of $\Gin$ and by neither of the others. An example is the western frontier of 
the triangle in Figure~\ref{FigureTNNs}:
	\begin{equation*}
	\begin{split}
	W_\NNs :\ 1,2,3,6,5,15,105,70,1,5,33,55,65,273, 1001, \dots
	\end{split}
\end{equation*}
Various evidence obtained by computer verifications suggest that no square of any prime divides a 
term of this sequence. 
In other words, no soliton grows as large as to touch the western edge $W_\NNs$.
Corollary~\ref{Corolarry2} shows that this is true for solitons $\fS(2,g)$, $g\ge 2$.

At the exponents level, this is the counterpart of the Gilbreath's 
Conjecture~\cite[A10]{Guy2004}, \cite{Odl1993}, which refers to the similar process that starts 
with the sequence of primes as $\Gin$ and grows the future generations with children born by taking 
the absolute difference of their parents. The Gilbreath's Conjecture says that the west edge of the 
triangle composed of these rows of successive gaps of gaps, contains only ones.

\renewcommand{\thefootnote}{$\dagger$} 
Another example extending this widespread property~\cite{Odl1993} is the matrix that starts 
with the sequence of Sophie Germain primes\footnote{A positive integer $p$ is a Sophie
Germain prime if $p$ and $2p+1$ are primes at the same time.} and is generated in the same way, 
listing successively the gaps from the previous row of gaps.  For this matrix, John W. 
Layman~\cite[{\tt A080209}]{OEIS} observed and conjectured that the left edge consists only of $1$s 
\mbox{and $3$s.}

In our multiplicative setting, we conjecture that the maximal power of any prime that appears in 
the decomposition of the numbers situated on the left edge of $T_{\NNs}$ is one.
\begin{conjecture}[Section 9 \cite{CZ2013}]
   The left edge of the infinite triangle generated by the iterated application of the
$Z$-rule to the set of positive integers contain only square free numbers.
\end{conjecture}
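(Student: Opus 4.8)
The plan is to reduce the statement to the prime-by-prime tomographies and then to the geometry of the $Z$-solitons. By the multiplicativity of the $Z$-rule — recall that $T_\NNs$ is the element-wise product of its $p$-tomographies, and that on exponents the rule acts independently as the absolute difference $|\alpha-\beta|$ for each prime — it suffices to fix a prime $p$ and show that every entry of the west edge of the $p$-tomography has $p$-exponent at most $1$. Writing $a^{(0)}_n=v_p(n)$ for $n\ge 1$ and iterating $a^{(k+1)}_n=\bigl|a^{(k)}_n-a^{(k)}_{n+1}\bigr|$, the west edge is the sequence $\bigl(a^{(k)}_1\bigr)_{k\ge 0}$, and the claim is exactly that $a^{(k)}_1\le 1$ for all $k$, that is, that no soliton $\fS(p,g)$ with $g\ge 2$ ever reaches the first column.

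First I would record the harmless ``baseline''. If each $n$ is replaced by its radical $\prod_{p\mid n}p$, then in every $p$-tomography the initial row is the indicator $a^{(0)}_n=\mathbf{1}[p\mid n]\in\{0,1\}$; since $|x-y|=x\oplus y$ on $\{0,1\}$, the whole tomography stays in $\{0,1\}$ and is governed by the Pascal-mod-$2$ (Sierpinski) rule, so this matrix is \emph{entirely} squarefree. This is the matrix the abstract calls ``free of solitons'', and it shows that, because $|x-y|\le\max(x,y)$ forbids manufacturing a value $\ge 2$ out of values $\le 1$, the only way an exponent $\ge 2$ can appear anywhere in $T_\NNs$ is through the sparse initial positions $n$ with $p^2\mid n$, the sources of the solitons $\fS(p,g)$. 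The same linear computation applied to the true initial row gives, modulo $2$, that $a^{(k)}_1\equiv\sum_{j=0}^{k}\binom{k}{j}v_p(1+j)\pmod 2$; this pins down the parity of each west-edge exponent but, crucially, not its size, so it cannot by itself exclude the value $2$.

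The heart of the argument is to control how the solitons propagate toward the west edge. Two locality facts are available: the value $a^{(k)}_1$ depends only on the initial segment $a^{(0)}_1,\dots,a^{(0)}_{k+1}$, so a source at position $m=p^g$ can influence the first column no earlier than row $m-1$; and, tracking a soliton by its leftmost cell of value $\ge 2$, that boundary can move left by at most one column per row. I would then exploit the self-similar block structure of $v_p$ — the decomposition into $p-1$ zeros followed by $1+v_p(m)$, coming from $v_p(pm)=1+v_p(m)$ — to show that the large values inside a soliton occur in adjacent equal pairs whose collision produces a $0$ (as already visible for $p=3$, $g=2$, where the two $2$'s born near position $9$ annihilate a few rows later), so that each soliton closes up while its left boundary is still several columns east of the first one. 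Carrying this out rigorously for $p=2$ is feasible because the shapes $\fS(2,g)$ admit the explicit description of Section~\ref{SectionSoliton2}, from which Corollary~\ref{Corolarry2} reads off the required bound on the left boundary.

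The main obstacle is the passage to odd primes. There the solitons $\fS(p,g)$ are the fringed Sierpinski figures of Figure~\ref{FigSolitons}, and two complications appear at once: the layers coming from the successive powers $p^2,p^3,\dots$ (values $\ge 2,\ge 3,\dots$) overlap and interact nonlinearly through the absolute value, and distinct solitons sit close together, so a uniform bound on the position of the left boundary presupposes that solitons neither merge nor touch — precisely Conjecture~\ref{ConjectureSolitons}. Thus a complete proof seems to require a quantitative description of the shape and westward extent of $\fS(p,g)$ that is uniform in $g$, or an invariant surviving the nonlinear interaction of the layers; producing such a description is the step I expect to be genuinely hard, and is the reason the full statement remains conjectural.
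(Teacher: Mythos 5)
This statement is left as a conjecture in the paper --- there is no proof of it anywhere in the text --- so there is nothing to compare your attempt against except the partial results the authors actually establish, and your proposal is, by your own admission, not a proof either. That said, your reduction is sound and matches the paper's framework exactly: the decomposition of $T_\NNs$ into $p$-tomographies, the observation that $|x-y|\le\max(x,y)$ confines exponents $\ge 2$ to the cones emanating from the initial cells with $p^2\mid n$ (i.e.\ to the solitons $\fS(p,g)$), the locality bound saying a source at column $m$ cannot reach the west edge before row $m$, and the one-column-per-row bound on the leftward drift of the set of cells with exponent $\ge 2$ are all correct. Your treatment of $p=2$ via the explicit description of $v_2(T_\NNs)$ is precisely what the paper does: Theorem~\ref{TheoremTNNsdoi} gives the complete slice-and-Pascal-triangle structure of the $2$-tomography, and Corollary~\ref{Corolarry2} extracts the statement that no $4$ appears on $W_\NNs$. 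You are also right that the mod-$2$ binomial identity $a^{(k)}_1\equiv\sum_j\binom{k}{j}v_p(1+j)$ controls only the parity of the west-edge exponent and can never rule out the value $2$, so the linear theory over $\FF_2$ (which is what makes the squarefree analogue $T_\PP$ tractable in Sections~\ref{SectionPP}--\ref{SectionWestPP}) is structurally incapable of closing the argument.

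The genuine gap is the one you name: for odd $p$ there is no description of $\fS(p,g)$, uniform in $g$, from which one could bound its westward extent, and the iteration $a\mapsto|\Delta a|$ on exponents $\ge 2$ is nonlinear, so the layers coming from $p^2,p^3,\dots$ and from distinct sources do not superpose. Two further cautions on the sketch as written. First, even granting Conjecture~\ref{ConjectureSolitons} (solitons neither overlap nor touch), you would still need a quantitative statement --- that the left boundary of $\fS(p,g)$, whose source sits at column $p^g$, never drifts all the way to column $1$ --- and non-touching alone does not give this; the empirical observation in the paper is only that larger solitons appear \emph{further} from the west edge, which is exactly the unproved input. Second, your heuristic that large values ``occur in adjacent equal pairs whose collision produces a $0$'' is true at the birth of a soliton (the cell $p^g$ has neighbours of valuation $0$, so row two of the tomography contains the adjacent pair $g,g$) but the subsequent evolution interleaves these large values with the ambient $0/1$ texture, and no invariant is identified that survives this interaction. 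So the proposal is a correct problem analysis with an honest localisation of the difficulty, not a proof --- consistent with the fact that the statement remains open in the paper.
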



The object of Sections \ref{SectionPP}--\ref{SectionWestPP} is to compare and analyze the 
similarities between $T_\NNs$ and an analogue matrix that has no  solitons.
The $p$-tomographies of this new matrix are generated by sequence 
$\cA_p$ 
 and in Theorem~\ref{TheoremPeriodicity} we show  that these tomographies are eventually periodic 
for all $p$. Furthermore,
we observe the closeness of the citadels on the western side of the two matrices and in 
Theorem~\ref{TheoremWPP2lap} we characterize 
the structure of the sinuous series of extreme values of the western edge of the matrix with no 
solitons.

Theorem~\ref{TheoremTNNsdoi} gives a complete characterization of  the 
$2$-tomography of  matrix $T_\NNs$. In particular, it shows that there are no fours on the 
western edge of $T_\NNs$, solving a problem raised by  N. J. A. Sloane~\cite[A222313]{OEIS}.
Our analysis in Sections \ref{SectionPP}--\ref{SectionWestPP} allows us to formulate the precise 
Conjecture~\ref{Conjecture3} regarding another problem raised by Sloane\cite[A222313]{OEIS}, 
\cite[Question 3]{CZ2014}.

\section{Notations}

Starting with a sequence of integers $\cS=\{s_1,s_2,\dots\}$, we consider the matrix whose first
row is $\cS$ and the following ones are generated with the $Z$-rule. We denote this matrix by
$T_{\cS}=(t_{j,k})_{1\le j,k}$, where $t_{1,1}=s_1$, $t_{1,2}=s_2,\dots$ and
\begin{equation*}
   t_{j,k}=Z(t_{j-1,k},t_{j-1,k+1}),\ \text{ for} \  2\le j,\ 1\le k.
\end{equation*}
If the initial sequence is a finite ordered set  $\cS=\{s_1,\dots, s_K\}$, we obtain the numerical 
triangle
\begin{equation*}
   T_{\cS}(K)=\big\{t_{j,k}\ \colon \  1\le j \le K,\ 1\le k\le K-j+1\big\},
\end{equation*}
whose first row is $t_{1,1}=s_1$, $t_{1,2}=s_2,\dots, t_{1,K}=s_K$, and following ones are
generated recursively by
\begin{equation*}
	  t_{j,k}=Z(t_{j-1,k},t_{j-1,k+1}), \quad\text{for $2\le j\le K$ and $1\le k\le K-j+1$.}
\end{equation*}

We say that $t_{j,k}=Z(t_{j-1,k},t_{j-1,k+1})$, for $j\ge 2$,  is the \textit{child} of its
\textit{parents}
$t_{j-1,k}$ and $t_{j-1,k+1}$ and in pictures we usually place the child in the middle, below its
parents.

The $j$-th row of the matrix is called the $j$-th \textit{generation} and we denote it by
\begin{equation*}
   G_{\cS}(j)=\big\{t_{j,k}\ \colon \  1\le k \big\}\quad\text{and}\quad
   G_{\cS}(j; K)=\big\{t_{j,k}\ \colon \  1\le k \le K\big\}\,,
\quad\text{for $j\ge 1$}\,.
\end{equation*}
We denote the \textit{west-side} of the triangle by
\begin{equation*}
   W_{\cS}=\big\{t_{j,1}\ \colon \  1\le j \big\}\quad\text{and}\quad
   W_{\cS}(K)=\big\{t_{j,1}\ \colon \  1\le j \le K\big\}\,.
\end{equation*}

The evolution at the exponents
level is presented into the following tables:
\begin{equation*}
     \begin{split}
             v_p(T_{\cS})&=\big\{v_p(t_{j,k})\ \colon \  1\le j, k \big\},\\
	     v_p(T_{\cS}(K))&=\big\{v_p(t_{j,k})\ \colon \  1\le j \le K,\ 1\le k\le K-j+1\big\}.
     \end{split}
\end{equation*}

Given  an infinite matrix  $T_{\cS}$ or a bounded triangle $T_{\cS}(K)$, we denote their 
$p$-\textit{tomography} 
(also called the $p$\textit{-slice} or the $p$\textit{-section}) by 
\begin{equation*}
     \begin{split}
             T_{\cS,p}&=\big\{p^{v_p(t_{j,k})}\ \colon \  1\le j, k \big\},\\
	     T_{\cS,p}(K)&=\big\{p^{v_p(t_{j,k})}\ \colon \  1\le j \le K,\ 1\le k\le K-j+1\big\}.
     \end{split}
\end{equation*}
Thus, the superposition of all $p$-slices recovers the full matrix:
\begin{equation*}
     \begin{split}
             T_{\cS}=\prod_{p}T_{\cS,p}\quad \text{and}\quad
	     T_{\cS}(K)=\prod_{p} T_{\cS,p}(K),
     \end{split}
\end{equation*}
where the product over all primes $p$ is taken component-wise.

For any positive integer $n$, we denote by $p(n)$ the largest square free number that divides $n$, 
and by $\PP$ the sequence of these numbers:
\begin{equation}\label{eqDefPP}
	\begin{split}
	p(n)=\prod_{p | n}p,\qquad 
	\mathbb{P} = \{p(n) : n\in\mathbb{N}\}\,.
	\end{split}
\end{equation}

We denote by $\FF_2[[X]]$ the ring of meromorphic series of variable $X$ and coefficients in  the
field with two elements $\FF_2$ and by $\FF_2[[X]]^*\subset \FF_2[[X]]$ the collection of series
that are sums of monomials $X^k$ with $k\ge 1$, only.

As usual, the number of distinct prime factors of $n$ is denoted by $\omega(n)$ and the notation 
for the multiplicative order of $a$ modulo $p$ is  $\ind_p(a)$.

\section{The $2$-tomography of $T_\NNs$}\label{SectionSoliton2}

The real action on $\TNNsdoi$ is on the exponents level and, to understand its result, we need to
formalize it.
Let $\Nwords$ denote the collection of nonempty finite words over the 
infinite alphabet $\NN$.
We introduce the following sequence of words in $\Nwords$, defined recursively:
\begin{equation*}
  x_1=0,\quad x_n = x_{n-1}\concat (n-1) \concat x_{n-1},\ \ \text{for $n \geq 2$},
\end{equation*}
where ``$\concat$'' denotes the concatenation of integers. 
Note that $x_n$ is the concatenation of $2^n-1$ integers.
Since $x_n$ is an initial sub-word of 
$x_{n+1}$, for all $n \geq 1$, there exists a limit sequence $\bw_0 : \NNs \rightarrow \NN$, 
whose first $ 2^n - 1$ terms coincides with the letters of $x_n$, for $n \geq 1$.
We write: $\bw_0 = \varinjlim x_n$.
     
Similarly, starting with $1$ instead of $0$, we define the sequence of words
\begin{equation*}
  y_1=1,\quad y_n = y_{n-1}\concat n \concat y_{n-1},\ \ \text{for $n \geq 2$}
\end{equation*}
and obtain the limit sequence  $\bw_1 = \varinjlim y_n$, 
whose first $ 2^n - 1$ terms coincides with the letters of $y_n$, for $n \geq 1$.

The first terms of $\bw_0$ and $\bw_1$ are:
\begin{equation*}
	\begin{split}
	\bw_0:&\ \ 
		0, 1, 0, 2, 0, 1, 0, 3, 0, 1, 0, 2, 0, 1, 0, 4, \dots \\
	\bw_1:&\ \ 	
		1, 2, 1, 3,  1, 2, 1, 4, 1, 2, 1, 3,  1, 2, 1, 5, \dots
	\end{split}
\end{equation*}

For a given sequence $a : \NNs \rightarrow \NN$, we denote  by $\alpha(a)=\{\alpha_n\}_{n\ge 1}$
\textit{the sequence of absolute differences} between consecutive terms:
\begin{equation*}
	\alpha_n = | a_{n+1} - a_n |,\quad \text{for}\ n \geq 1
\end{equation*}
and by $\beta(a)=\{\beta_n\}_{n\ge 1}$ \textit{the bubbled 
sequence}, defined by
\begin{equation*}
	\beta_{2n-1} = \beta_{2n} = a_n,\quad \text{for}\ n \geq 1\,.
\end{equation*}
We use the same notations for the similar operations applied on words, where the action is on the 
the sequences of their letters. For example: 
$\alpha(x_2)=\alpha(010)=11$	and 
$\beta(y_2)=\beta(121)=112211$.

\medskip
\begin{figure}[bh]
 \centering
    \includegraphics[width=0.78\textwidth]{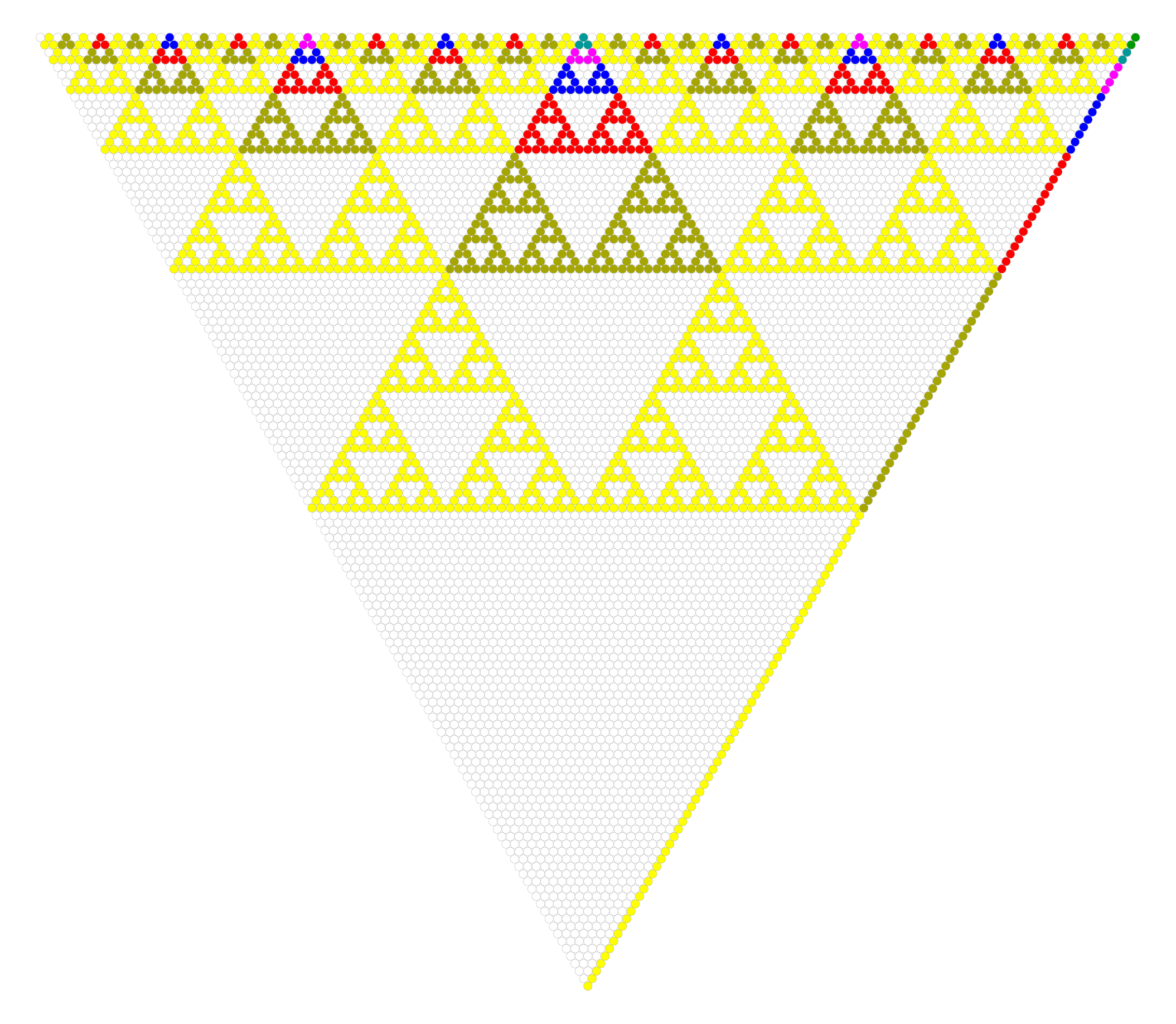}
 \caption{The tomography of $T_{\NNs}(129)$ for $p=2$. Notice that the larger solitons are further 
and further away from the western edge of $T_{\NNs}$.
	  \vspace*{-.95em}  \\
     	  \mbox{\small Code of colors for the cells containing the powers of $2$ from 
$0,\dots,7$:\protect \LEGEND_0-7}
}
 \label{FigTomo2}
 \end{figure}

The next lemma shows how $\bw_0$ and $\bw_1$ relates to one another through these operations.

\begin{lemma}\label{LemmaW01} The 
following properties hold true: 
\begin{enumerate}
\item $\bw_1 = \bw_0 + 1$, with element-wise addition;
\item $\bw_0 = v_2(\NNs)$, with term-wise application of the valuation $v_2$;
\item $\bw_1 = v_2(2\NNs)$;
\item $\alpha(\bw_0) = \alpha(\bw_1) = \beta(\bw_1)$.
\end{enumerate}
\end{lemma}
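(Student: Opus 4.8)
The plan is to prove the four assertions in the order (2), (1), (3), (4), letting each build on the previous one. The common engine is a pair of inductions on $n$ carried out at the level of the finite words $x_n$ and $y_n$; passing to the limit sequences is then immediate, since by construction $x_n$ (resp.\ $y_n$) coincides with the first $2^n-1$ letters of $\bw_0$ (resp.\ $\bw_1$).

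For (2) I claim that for $1\le k\le 2^n-1$ the $k$-th letter of $x_n$ equals $v_2(k)$; in the limit this is exactly $\bw_0=v_2(\NNs)$. I induct on $n$, the base case $n=1$ being $v_2(1)=0$. For the step I split the index range of $x_n=x_{n-1}\concat(n-1)\concat x_{n-1}$ into three parts. For $1\le k\le 2^{n-1}-1$ the letter is the $k$-th letter of the first copy of $x_{n-1}$, equal to $v_2(k)$ by hypothesis. At the middle position $k=2^{n-1}$ the letter is $n-1=v_2(2^{n-1})$. For $k=2^{n-1}+r$ with $1\le r\le 2^{n-1}-1$, the letter is the $r$-th letter of the second copy, namely $v_2(r)$, and here I invoke the valuation identity $v_2(2^{n-1}+r)=v_2(r)$: since $r<2^{n-1}$ we have $v_2(r)\le n-2<n-1$, so the two summands have distinct $2$-adic valuations and the smaller one wins. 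This second-half case, where one must align the shifted index $k=2^{n-1}+r$ with the valuation identity, is the only place that requires real care; everything else is bookkeeping.

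Statement (1) I prove through the stronger word-level identity $y_n=x_n+1$ (add $1$ to every letter), by induction on $n$. The base case is $y_1=1=x_1+1$; for the step, adding $1$ to every letter of $x_n=x_{n-1}\concat(n-1)\concat x_{n-1}$ turns the flanking copies into $x_{n-1}+1=y_{n-1}$ and the single middle letter $n-1$ into $n$, which is precisely $y_n=y_{n-1}\concat n\concat y_{n-1}$. Taking limits gives $\bw_1=\bw_0+1$. Statement (3) is then instant: by (1) and (2) one has $\bw_{1,n}=\bw_{0,n}+1=v_2(n)+1=v_2(2n)$, the $n$-th term of $v_2(2\NNs)$.

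Finally, for (4) I treat the two equalities separately. The equality $\alpha(\bw_0)=\alpha(\bw_1)$ follows from (1) and the translation invariance of consecutive differences, since adding the constant $1$ to every term leaves $|a_{n+1}-a_n|$ unchanged. For $\alpha(\bw_0)=\beta(\bw_1)$ I use two structural facts about $\bw_0=v_2(\NNs)$: the odd-indexed terms vanish, because $v_2(2k-1)=0$, while the even-indexed terms satisfy $\bw_{0,2m}=v_2(2m)=1+v_2(m)=\bw_{1,m}$. Hence $\alpha(\bw_0)_{2k-1}=|\bw_{0,2k}-\bw_{0,2k-1}|=\bw_{1,k}$ and likewise $\alpha(\bw_0)_{2k}=|\bw_{0,2k+1}-\bw_{0,2k}|=\bw_{1,k}$, the absolute values dropping out because every term of $\bw_1$ is positive. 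Thus consecutive pairs of $\alpha(\bw_0)$ both equal $\bw_{1,k}$, which is exactly the defining relation $\beta(\bw_1)_{2k-1}=\beta(\bw_1)_{2k}=\bw_{1,k}$, completing the proof.
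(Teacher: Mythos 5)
Your proof is correct. For items (1)--(3) you follow essentially the same route as the paper: the same induction over the words $x_n$ (splitting the index range at the middle letter $2^{n-1}$ and using $v_2(2^{n-1}+r)=v_2(r)$), the same word-level observation for $\bw_1=\bw_0+1$, and the same one-line deduction of (3). Where you genuinely diverge is in (4). The paper proves $\alpha(\bw_1)=\beta(\bw_1)$ by a self-contained induction on the words $y_n$, using $\alpha(y_{n+1})=\alpha(y_n\concat (n+1)\concat y_n)=\beta(y_{n-1})\concat n\concat n\concat \beta(y_{n-1})=\beta(y_n)$, which never invokes part (2). You instead prove $\alpha(\bw_0)=\beta(\bw_1)$ directly from the arithmetic identification $\bw_0=v_2(\NNs)$: odd-indexed terms vanish and $\bw_{0,2m}=1+v_2(m)=\bw_{1,m}$, so consecutive differences read each value $\bw_{1,k}$ twice, which is exactly the bubbling. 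Your computation checks out and is arguably more illuminating about \emph{why} the doubled pattern appears, at the cost of making (4) logically dependent on (2); the paper's induction keeps (4) purely combinatorial on words, in the spirit of the surrounding section. Both arguments are valid and of comparable length.
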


\begin{proof}
 (1) The equality  $\bw_1 = \bw_0 + 1$ follows directly from the definitions. 
 
 (2)
We prove the equality by induction. The initial step: $\bw_0(1) = 0 = v_2(1)$.
Now, suppose that  $\bw_0(n) = v_2(n)$ for all $n \in \{1, 2, \dots , 2^k-1\}$. 
Then $v_2(2^k) = k$ and $v_2(2^k + m) = v_2(m)$, 
for all  $m \in \{1, 2, \dots , 2^k-1\}$, by the definition of the valuation.
Using the definition of $\bw_0$, this means that   $\bw_0(n) = v_2(n)$, for
$n \in \{1, 2, \dots , 2^{k+1}-1\}$, as needed. 

 (3) $\bw_1 = v_2(2\NNs)$ follows from (1) and (2).

 (4) The equality
$\alpha(\bw_0) = \alpha(\bw_1)$ also follows directly from (1) and the definition of 
$\alpha(\cdot)$.
So it remains to prove  that $\alpha(\bw_1) = \beta(\bw_1)$. 
We proceed by induction. 

The initial step: applying $\alpha(\cdot)$ to the finite sequence $1,2,1$ (which are the letters 
of $y_2$, the beginning of $\bw_1$),  we get $1,1$, the beginning of $\beta(\bw_1)$ or, with the 
notation on words, $\alpha(y_2)=\alpha(121)=11=\beta(y_1)$.

The induction step: suppose that $\alpha(y_n)=\beta(y_{n-1})$. Then
\begin{equation*}
	\begin{split}
	\alpha(y_{n+1})= \alpha(y_n\concat (n+1)\concat y_n)
	= \beta(y_{n-1})\concat n\concat n\concat \beta(y_{n-1}) = \beta(y_n),
	\end{split}
\end{equation*}
since the last and the first letters of $y_{n-1}$ are equal to $1$. 
This completes the proof of the lemma.
\end{proof}

Thus, by Lemma \ref{LemmaW01} we see that the sequences of gaps between consecutive terms of 
$\bw_0$ and $\bw_1$ both coincide with the bubbled  
sequence
\begin{equation*}
	\begin{split}
	\beta(\bw_1): 
		1, 1, 2, 2,  1, 1, 3, 3, 1, 1, 
2, 2,  1, 1, 4, 4, 1, 1, 2, 2,  1, 1, 3, 3, 1, 1, 2, 2,  1, 1, 5, 5, \dots
	\end{split}
\end{equation*}

With the above notations, we see that the sequence of exponents of $2$ on the first row of 
$\TNNsdoi$ coincides with $\bw_0$. Then, by Lemma~\ref{LemmaW01}, it follows that the subsequent 
sequences of exponents of $2$ on the following rows of $\TNNsdoi$ are: 
	$\alpha(\bw_0), \alpha^{(2)}(\bw_0), \alpha^{(3)}(\bw_0),\dots$ In general, the $m$th row of 
$\TNNsdoi$ is
\begin{equation*}
	\begin{split}
	2^{\alpha^{(m-1)}(\bw_0)(1)},\ 2^{\alpha^{(m-1)}(\bw_0)(2)},\ 2^{\alpha^{(m-1)}(\bw_0)(3)},\ 
	\dots ,	\quad\text{for $m\ge 1$.}							
	\end{split}
\end{equation*}

Now we can describe the structure of the matrix of the exponents $v_2(T_\NNs)$, which corresponds
explicitly to the explicit description of the $2$-tomography of $T_\NNs$.
Its initial cut-off triangle
$\TNNsdoi(129)$, composed of $129$ rows, is shown in Figure~\ref{FigTomo2}. We see that,
geometrically, it is part of an infinite Sierpinski triangle. Notice that the horizontal rows
are grouped naturally in \textit{slices} containing sequences of pairs of triangles. The couple of
triangles in each pair is colored with the same color and the change of colors from a pair to
another corresponds to the change of numbers in the \mbox{sequence  $\beta(\bw_1)$.}

The sequence of slices $\{S_k\}_{k\ge 0}$ in which $v_2(T_\NNs)$ is partitioned are larger and
larger in size. 
The slice $S_0$ is just the first row and it is exceptional. 
The next slice, $S_1$, is the second row. Then, for any $k\ge 2$, the slice $S_k$ groups
$2^{k-1}$ rows, those from the $(2^{k-1}+1)$th  till the $2^k$th.

In any slice, the largest triangles formed by cells of the same color are the top rows of the
Pascal arithmetic triangle modulo $2$, with the odd entries replaced by a certain positive integer. 
Such a triangle depends on two parameters: the height $h$ and the weight $t$, which is the value
of the non-zero entries. We denote it by $P_2(h,t)$ (see the left triangle
in Figure~\ref{FigureP2ht} for such an example).

Triangle $P_2(h,t)$ is generated as Pascal's classic triangle, by starting from the top with
a symbolic variable $t$, which satisfies the rule $t+t=0$. The same result is obtained if the top
is placed somewhere in a row of zeros (see the matrix from the right-side of
Figure~\ref{FigureP2ht}).

\begin{figure}[ht]
\bigskip
\begin{minipage}[c]{0.3\linewidth}
\centering
\small
 \vspace*{-1.4em} 
\begin{equation*}
\arraycolsep=2.4pt\def\arraystretch{1.02}
\begin{array}{ccccccccccccc}
& &   &  &   &  & 10 &   &  &  &  &  &   \\ 
& &   &  &   & 10 &  & 10  &  &  &  &  &   \\ 
& &   &  &  10 &  &  0 &   & 10 &  &  &  &   \\ 
& &   & 10 &  & 10 &  & 10  &  & 10 &  &  &   \\ 
& &  10  & & 0 &  & 0 &  & 0 & & 10 &  &   \\  
& 10 &  &10 & & 0 &  & 0 &  & 10 &  & 10 &   \\   
10 & & 0 & & 10 &  & 0 &  & 10 & & 0 & & 10  \\     
\end{array}
\end{equation*}
\label{FigureP2710}
\end{minipage}
\hspace{11mm}
\begin{minipage}[c]{0.53\linewidth}
\centering
\small
 \vspace*{-1.4em} 
\begin{equation*}
\arraycolsep=2.4pt\def\arraystretch{1.42}
\begin{array}{ccccc ccccc c ccccc ccccc}
  \mycdots\ \, & & 0 & & 0 & & 0 & & 0 & & t & & 0 & & 0 & & 0 & & 0 & &\ \, \mycdots\\
  & \mycdots &  & 0 &  & 0 &  & 0 &  & t &  & t &  &  0 &  &0 &  & 0 &  & \mycdots & \\
  & &  \mycdots &  & 0 &  & 0 &  & t &  & 0 & &  t &   &  0 & &  0 &  & \mycdots &  & \\
  & &  &  \mycdots &  & 0 &  & t &  & t & & t &   &  t & &  0 &  &   \mycdots & & &\\
  & &  &   &  \mycdots & & t &  & 0 & & 0 &  &  0 &  & t &   &  \mycdots &   & & &\\
\end{array}
\end{equation*}
\label{FigureP25t}
\end{minipage}
\caption{
\texttt{Left:} The triangle $P_2(7,10)$ of height $7$ and weight $10$.\\ 
\texttt{Right:} 
A triangle $P_2(5,t)$ 
generated by a single non-zero cell of weight $t$ placed in the center of a string of zeros of
length at least $4+1+4=9$.}\label{FigureP2ht}
\end{figure}

\begin{lemma}\label{LemmaP2t} 
Let $h$ be a positive integer and let  $t$ be a formal variable. 
 Let $\bu=\{u_k\}_{k\ge 1}$ be a sequence of zeros, except one term $u_n = t$ and suppose that 
$n \geq h$. Then, the matrix with rows $\bu, \alpha(\bu),
\alpha^{(2)}(\bu)\dots, \alpha^{(h-1)}(\bu)$ contains triangle $P_2(h,t)$.
\end{lemma}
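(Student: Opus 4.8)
The plan is to linearize the entire construction by exploiting that, on sequences taking values in $\{0,t\}$, the absolute-difference operation $\alpha$ agrees with addition over $\FF_2$. First I would identify $t$ with the nonzero element of $\FF_2$ and the symbol $0$ with its zero, and check the four cases $|0-0|=0$, $|t-0|=|0-t|=t$, and $|t-t|=0$; these are precisely the $\FF_2$ addition table, and in particular the formal relation $t+t=0$ used to define $P_2(h,t)$ is reproduced by $|t-t|=0$. Consequently $\alpha$ sends $\{0,t\}$-valued sequences to $\{0,t\}$-valued sequences and acts there as the finite-difference operator $\Delta f(k)=f(k)+f(k+1)$ over $\FF_2$. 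Since $\bu$ is $\{0,t\}$-valued, every iterate $\alpha^{(m)}(\bu)$ remains $\{0,t\}$-valued, so no entry is ever forced outside the alphabet and the identification is consistent throughout the matrix.

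Next I would iterate $\Delta$. A short induction on $m$ yields
\[\alpha^{(m)}(\bu)(k)=\sum_{j=0}^{m}\binom{m}{j}\,u_{k+j}\pmod 2,\]
and since $\bu$ has its single nonzero entry $u_n=t$, the only surviving term is $j=n-k$. Hence $\alpha^{(m)}(\bu)(k)=\binom{m}{n-k}\,t$, read modulo $2$ and equal to $0$ unless $0\le n-k\le m$. Thus the entry in row $m$ and column $k$ equals $t$ exactly when $\binom{m}{n-k}$ is odd and vanishes otherwise.

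This is precisely the local rule generating the Pascal triangle $P_2(h,t)$ (apex value $t$, nonzero cells of weight $t$, relation $t+t=0$): reading the rows $m=0,1,\dots,h-1$ over the columns $k=n,n-1,\dots,n-m$ reproduces the Pascal-mod-$2$ pattern of height $h$ and weight $t$. The apex sits at $(0,n)$, column $n$ carries $t$ in every row because $\binom{m}{0}=1$, and the leftmost occupied column of row $m$ is $n-m$ since $\binom{m}{m}=1$. It then remains to verify that this triangular block fits inside the admissible index range $k\ge 1$: in row $m\le h-1$ the leftmost occupied column is $n-m\ge n-(h-1)$, which is $\ge 1$ exactly because $n\ge h$. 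Every column $k>n$ and every column $k<n-m$ of the listed rows carries a zero, so the nonzero part of these $h$ rows is exactly $P_2(h,t)$, and the matrix contains it.

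The only genuinely delicate points are bookkeeping rather than depth. I must match the half-cell left drift of the ``child in the middle'' drawing convention to the column indexing of $\alpha^{(m)}(\bu)$, so that the abstract array $\big(\binom{m}{n-k}\,t\big)$ really renders as the triangle drawn in Figure~\ref{FigureP2ht}; and I must pin the boundary inequality carefully so that $n\ge h$ emerges as the sharp condition for the triangle not to be truncated on the left. No number-theoretic input on $\binom{m}{j}\bmod 2$ (Lucas or Kummer) is needed, since $P_2(h,t)$ is itself defined by the same mod-$2$ Pascal recursion that $\alpha$ produces; indeed, one could bypass binomials entirely by an induction comparing the two arrays row by row.
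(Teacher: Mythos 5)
Your proof is correct and follows essentially the same route as the paper's (one-line) argument: the key points in both are that the absolute difference restricted to $\{0,t\}$ coincides with addition in $\FF_2$, so the single seed $u_n=t$ propagates exactly as a mod-$2$ Pascal triangle, and that $n\ge h$ guarantees the triangle is not truncated at the left boundary. Your explicit closed form $\alpha^{(m)}(\bu)(k)=\binom{m}{n-k}\,t$ is just a more detailed rendering of the same observation.
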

\begin{proof}
     It suffices to note that condition $h\le n$ ensures that the object that develops from 
$u_n = t$ is not influenced by external obstacles and the operation of taking the absolute value of 
the difference acts on $\{0,t\}$  exactly as the operation that grows a Pascal triangle with entries 
in $\FF_2$.

\end{proof}

We summarize the complete description of $v_2(T_\NNs)$ (respectively $\TNNsdoi$) in the next
theorem.

\begin{theorem}\label{TheoremTNNsdoi}
{\normalfont (1)} Slice $S_0$ of the matrix $v_2(T_\NNs)$ is  sequence $\bw_0$. 
The next rows of $v_2(T_\NNs)$ are grouped in slices $S_k$, such that, for any $k\ge 1$, slice
$S_k$ is formed by  rows from the $(2^{k-1}+1)$th  till the $2^k$th.
{\normalfont (2)} The single row of $S_1$ is  sequence $\alpha(\bw_0)=\beta(\bw_1)$.
{\normalfont (3)} For any $k\ge 1$, the collection of non-zero elements in  slice $S_k$ is the
union of triangles $P_2(2^{k-1},t)$ and the sequence of their weights (from left to right) coincides
with $\beta(\bw_1)$. The top vertices of these triangles are on the first row of the slice and
their bases are adjacent and partition the bottom row.

\end{theorem}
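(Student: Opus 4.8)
The plan is to work entirely at the level of exponents of $2$, using the already-recorded fact that the $m$-th row of $\TNNsdoi$ is $\alpha^{(m-1)}(\bw_0)$. Part (1) is then pure bookkeeping: row $1$ is $\bw_0$ by construction (so $S_0=\bw_0$), and the remaining rows are merely labelled by the dyadic blocks $S_k=\{\text{rows }2^{k-1}+1,\dots,2^k\}$. Part (2) is immediate, since the single row of $S_1$ is $\alpha(\bw_0)$, which equals $\beta(\bw_1)$ by Lemma~\ref{LemmaW01}(4). Thus everything reduces to part (3), and I would split that into two independent tasks: first, determine the \emph{top row} $R_k:=\alpha^{(2^{k-1})}(\bw_0)$ of each slice; second, show that this top row develops, over the $2^{k-1}$ rows of the slice, into a row of non-interfering Pascal triangles.

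For the first task, which is the heart of the argument, I would introduce the zero-padding companion of $\beta$: for a sequence $c$ let $\beta_0(c)$ be defined by $\beta_0(c)_{2m-1}=0$ and $\beta_0(c)_{2m}=c_m$. A one-line check on even and odd indices gives the two commutation identities $\alpha(\beta(c))=\beta_0(\alpha(c))$ and $\alpha(\beta_0(c))=\beta(c)$, whence $\alpha^{(2)}(\beta(c))=\beta(\alpha(c))$ and, by iteration, $\alpha^{(2j)}(\beta(c))=\beta(\alpha^{(j)}(c))$. Starting from $R_1=\alpha(\bw_0)=\beta(\bw_1)$ and writing $2^{k-1}-1=2(2^{k-2}-1)+1$, I would first collapse the even block $\alpha^{(2(2^{k-2}-1))}$ using $\alpha^{(2j)}(\beta(c))=\beta(\alpha^{(j)}(c))$, and then absorb the single remaining $\alpha$ using $\alpha(\beta(c))=\beta_0(\alpha(c))$; using also $\alpha^{(j)}(\bw_1)=\alpha^{(j)}(\bw_0)$ for $j\ge 1$ (which follows from $\alpha(\bw_1)=\alpha(\bw_0)$), this yields the clean recursion $R_k=\beta_0(R_{k-1})$ for $k\ge2$. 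Solving it gives $R_k=\beta_0^{(k-1)}(\beta(\bw_1))$, so $R_k$ is supported exactly on the multiples of $2^{k-1}$, with $R_k(2^{k-1}m)=\beta(\bw_1)(m)$ and zeros elsewhere.

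For the second task I would feed this description into Lemma~\ref{LemmaP2t}. The nonzero entries of $R_k$ sit at positions $q_m=2^{k-1}m$, spaced exactly $2^{k-1}$ apart, each carrying the weight $t_m=\beta(\bw_1)(m)$, and each satisfies $q_m\ge 2^{k-1}=h$. Under the absolute-difference rule a lone nonzero at position $q$ spreads one step to the left per row, occupying $[q-i,q]$ after $i$ rows; so by Lemma~\ref{LemmaP2t} each apex would grow the triangle $P_2(2^{k-1},t_m)$, provided no two interfere. The key quantitative check is that, with initial spacing $2^{k-1}$ and unit leftward speed, for every $i\le 2^{k-1}-2$ the cell just to the right of triangle $q_m$ and the cell just to the left of it are still zero, so each triangle develops exactly as in isolation throughout the slice; only on the bottom row $2^k$ do the bases $[2^{k-1}(m-1)+1,\,2^{k-1}m]$ become adjacent, partitioning that row. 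Reading the apexes from left to right ($m=1,2,\dots$) returns the weight sequence $\beta(\bw_1)$, which completes part (3).

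The main obstacle I anticipate is the first task, specifically getting the parity bookkeeping of the recursion right: the exponent $2^{k-1}-1$ is odd, so one must separate a single $\alpha$ from an even block, be careful that $\alpha^{(2)}\circ\beta=\beta\circ\alpha$ is applied to the correct inner sequence, and note that the substitution $\bw_1\leftrightarrow\bw_0$ is legitimate only after at least one application of $\alpha$. The second task is conceptually routine — it is the standard statement that well-separated seeds generate independent Pascal/Sierpinski triangles — but still requires the explicit spacing-versus-speed inequality above to guarantee non-interference up to, and exact tiling on, the bottom row of each slice.
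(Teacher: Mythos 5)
Your proposal is correct, and its overall architecture (work at the level of exponents, dispose of (1) and (2) via the definitions and Lemma~\ref{LemmaW01}, then for (3) identify the seeds on the top row of each slice and grow them with Lemma~\ref{LemmaP2t}) matches the paper's; but the key step is carried out by a genuinely different route. The paper runs a single induction that carries the full geometric description of a slice: the hypothesis says the bottom row of $S_k$ is a concatenation of constant blocks of length $2^{k-1}$ with values $\beta(\bw_1)$, and one application of $\alpha$ to that row yields zeros inside each block and the junction differences $|t_m-t_{m+1}|$, i.e.\ seeds at the multiples of $2^{k}$ with weights $\alpha(\bw_1)=\beta(\bw_1)$ again. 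You instead compute the top row $R_k=\alpha^{(2^{k-1})}(\bw_0)$ directly from the commutation identities $\alpha\circ\beta=\beta_0\circ\alpha$ and $\alpha\circ\beta_0=\beta$ (both of which check out on even and odd indices), obtaining the recursion $R_k=\beta_0(R_{k-1})$ and the closed form $R_k=\beta_0^{(k-1)}(\beta(\bw_1))$; this pins down the seed positions and weights without ever invoking the triangle structure of the previous slice. Your route buys a self-contained formula for every slice's top row and cleanly decouples the algebra from the geometry; the paper's is more economical (no auxiliary operator $\beta_0$) and propagates the picture directly. Your second task --- the spacing-versus-speed check that seeds $2^{k-1}$ apart generate non-interacting triangles whose bases become adjacent exactly on the bottom row --- is the same argument the paper makes, only stated more explicitly (the paper compresses it into ``the size of the slice assures that the bases of these triangles are adjacent''); note also that the leftmost seed sits at column $q_1=2^{k-1}=h$, meeting the hypothesis $n\ge h$ of Lemma~\ref{LemmaP2t} with equality, so its base indeed starts at column~$1$.
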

\begin{proof}
     (1) follows by the definitions. (2) is proved in Lemma~\ref{LemmaW01}.
(3) The proof is by induction. The initial step, $k=1$, coincides with (2).

Suppose now that the stated description is valid for slice $S_k$ and let as look on $S_{k+1}$.
We begin with the first row of $S_{k+1}$.
Here, the first $2\cdot2^{k-1}-1$ cells are zeros, because, by the induction
hypothesis, on the previous row, the first $2\cdot2^{k-1}$ cells where the adjacent bases of two
triangles $P_2(2^{k-1},1)$.
The next element, the $2\cdot2^{k-1}$th, is $1=|1-2|$, since on the previous slice, the weight of
the second triangle was $1$ and the weight of the third triangle is $2$. Continuing in the same
way, we see that the next non-zero cell on the first row of  slice $S_{k+1}$ is the 
$4\cdot 2^{k-1}$th and its value is equal with $1=|2-1|$.
In this way we see that the non-zero cells on the first row of slice $S_{k+1}$ are those obtained
as absolute differences of the parent cells that are vertices of neighbor triangles
$P_2(2^{k-1},t)$ with different  weights from the previous slice. These are the 
$2^{k}$th, the $2\cdot2^{k}$th, the $3\cdot  2^{k}$th, 
and so on.
Moreover, by the induction hypothesis, the values of integers occupying these cells are the integers
in the sequence $\beta(\bw_1)$.

Now, using Lemma~\ref{LemmaP2t}, we find that from each of these cells, grows a triangle
$P_2(2^{k},t)$. Moreover, the weights of these triangles coincides with the integers on the non-zero
cells in the first row of $S_{k+1}$. Also, the size of the slice assures that the bases of these
triangles are adjacent. This concludes the proof of the induction step and of the theorem.
\end{proof}

In particular, Theorem~\ref{TheoremTNNsdoi} describes the western edge of the matrix $\TNNsdoi$.
\begin{corollary}\label{Corolarry1}
     The $2$-valuation of the elements of the west sequence $W_\NNs$ are:
\begin{equation*}
	\begin{split}
	 v_2\big(t(m, 1)\big) = 
		 \begin{cases} 
			1, & \   m = 2^k,\  k \geq 1 \\ 
			0, & \    \text{else.}
		 \end{cases}
	\end{split}
\end{equation*}
\end{corollary}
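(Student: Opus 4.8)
The plan is to read off the $2$-valuation on the west edge directly from the slice structure described in Theorem~\ref{TheoremTNNsdoi}, so the work is mostly bookkeeping rather than new machinery. First I would recall that the west edge consists of the leftmost entries $t(m,1)$ for $m\ge 1$, and that the corresponding $2$-valuation is the first letter of the $m$th row of $v_2(T_\NNs)$. By Theorem~\ref{TheoremTNNsdoi}, these rows are organized into slices: $S_0$ is row $1$, and for $k\ge 1$ the slice $S_k$ occupies rows $2^{k-1}+1$ through $2^k$. So the first step is to locate, for each $m$, which slice and which row within that slice the index $m$ belongs to, and then determine the first entry of that row.

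Next I would handle the two exceptional small cases and then the generic slice. Row $1$ (slice $S_0$) begins with $\bw_0(1)=0$, and row $2$ (slice $S_1$) is $\alpha(\bw_0)=\beta(\bw_1)$, whose first letter is $1$; note $m=2^1=2$ falls into the claimed case giving value $1$, consistent with $\beta(\bw_1)$ starting with $1$. For the main case I would use part (3) of Theorem~\ref{TheoremTNNsdoi}: within slice $S_k$ (for $k\ge 1$) the nonzero cells form triangles $P_2(2^{k-1},t)$ whose top vertices sit on the first row of the slice and whose bases partition the bottom row. The key observation is that in a triangle $P_2(h,t)$ grown Pascal-style modulo $2$ from a single apex, the leftmost column is nonzero in exactly the first and last rows of the triangle (rows $0$ and $h-1$), since the left edge of Pascal's triangle mod $2$ is all ones, but the \emph{left boundary of the slice} only coincides with the apex column of the leftmost triangle. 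Concretely, the leftmost triangle of slice $S_k$ has its apex at column $1$ of the slice's first row, so the first row of the slice has west entry equal to the weight of that leftmost triangle, while all intermediate rows of the slice have $0$ as their first entry because the leftmost triangle's left edge, being the hypotenuse running down-left, leaves column $1$ empty until its base.

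The crux of the argument is therefore to pin down where within slice $S_k$ the west edge picks up a nonzero value, and to check it happens exactly at $m=2^k$. Here is the clean way I would phrase it: the first row of slice $S_k$ is row $2^{k-1}+1$, and by part (3) the leftmost triangle $P_2(2^{k-1},t)$ has its apex on that row at the west position, with weight equal to $\beta(\bw_1)(1)=1$; its left edge then slides rightward as we descend, so the west entry is $0$ for rows $2^{k-1}+2,\dots,2^k-1$, until the base row $2^k$, where the leftmost triangle again meets column $1$. At the base the Pascal-mod-$2$ triangle of height $2^{k-1}$ has its bottom-left entry equal to its weight, namely $1$. Hence $v_2(t(2^k,1))=1$ and $v_2(t(m,1))=0$ for all other $m$ in the slice, which combined with the $S_0,S_1$ base cases yields the stated formula.

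The step I expect to be the main obstacle is verifying that the leftmost triangle's left boundary genuinely vacates column $1$ for all interior rows of the slice and returns to it precisely at the base row $2^k$ — in other words, confirming that no \emph{other} triangle from $S_k$ encroaches on column $1$ and that the geometry of $P_2(2^{k-1},t)$ places nonzero west entries only at its apex and its base-left corner. This reduces to the elementary fact that the first and last entries of each row of Pascal's triangle modulo $2$ are $1$ while, crucially, the triangle is positioned so that only its apex and its extreme-left base cell align with the global column $1$; I would justify this from the adjacency-and-partition statement in part (3), since it fixes the horizontal placement of every triangle in the slice. Once that alignment is nailed down, the corollary follows immediately.
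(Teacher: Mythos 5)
Your overall strategy --- reading the west edge off the slice-and-triangle description in Theorem~\ref{TheoremTNNsdoi} --- is exactly what the paper intends (it states the corollary as an immediate consequence, with no separate proof), and your final formula is correct, but the geometric claim your argument rests on is wrong and, taken literally, contradicts the corollary. You place the apex of the leftmost triangle $P_2(2^{k-1},t)$ of slice $S_k$ in column $1$ of the slice's first row and conclude that row $2^{k-1}+1$ has west entry equal to that triangle's weight. That is false: for $k=2$ it would give $v_2(t_{3,1})=1$, whereas $t_{3,1}=3$ is odd. The correct placement follows from the clause of part (3) that you quote but do not actually use, namely that the bases are adjacent and partition the bottom row. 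Since $t_{j,k}=Z(t_{j-1,k},t_{j-1,k+1})$, at the exponent level a cell depends on the cells in columns $k$ and $k+1$ above it, so an isolated seed propagates \emph{leftward} in matrix column indices; a height-$2^{k-1}$ triangle whose base occupies columns $1,\dots,2^{k-1}$ of row $2^k$ therefore has its apex in column $2^{k-1}$ of row $2^{k-1}+1$ (this is also explicit in the proof of the theorem, where the apexes are located in columns $2^{k-1}, 2\cdot 2^{k-1},\dots$). Consequently column $1$ meets the leftmost triangle only in the base row $2^k$, where the entry equals the weight $\beta(\bw_1)(1)=1$ because the extreme edge of a Pascal triangle mod $2$ carries the weight in every row; all other rows of the slice have a zero in column $1$. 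Your picture of the left edge ``sliding rightward and then returning to column $1$'' is internally inconsistent and is a symptom of this orientation error.

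Once the apex is put in the correct column the rest of your bookkeeping goes through: $m=1$ gives $\bw_0(1)=0$; $m=2$ (slice $S_1$) gives $\beta(\bw_1)(1)=1$, matching the case $m=2^1$; and for $k\ge 2$ slice $S_k$ contributes a single $1$ at $m=2^k$ and zeros at $m=2^{k-1}+1,\dots,2^k-1$, which is exactly the stated formula.
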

Also, Theorem~\ref{TheoremTNNsdoi} answers to a question of  
  N. J. A. Sloane, who
asks whether there is a proof that $4$ cannot appear on the western edge of the matrix
$T_\NNs$~\cite[A222313]{OEIS}.
\begin{corollary}\label{Corolarry2}
     There is no $4$ on $W(T_\NNs)$.
\end{corollary}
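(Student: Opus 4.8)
The plan is to reduce the statement to the already-established Corollary~\ref{Corolarry1}. First I would observe that $4 = 2^2$, so a \emph{necessary} condition for a west-edge entry $t(m,1)$ to equal $4$ is that $v_2\big(t(m,1)\big) = 2$. By Corollary~\ref{Corolarry1}, however, the $2$-valuation of every entry on $W_\NNs$ is either $0$ (when $m$ is not a power of two) or $1$ (when $m = 2^k$ with $k \ge 1$); in neither case does it reach $2$. Hence no west-edge entry can equal $4$. The same reasoning in fact gives the slightly stronger conclusion that $4 \nmid t(m,1)$ for every $m$, since divisibility by $4$ already forces $v_2\big(t(m,1)\big) \ge 2$.

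The point worth emphasizing is that all the substantive work sits upstream: Theorem~\ref{TheoremTNNsdoi} supplies the complete Sierpinski-type description of the $2$-tomography of $T_\NNs$ as a union of triangles $P_2(2^{k-1},t)$, and Corollary~\ref{Corolarry1} extracts from it that the west-edge $2$-valuations take only the values $0$ and $1$. Once that bound of $1$ is in hand, the absence of $4$ (and indeed of any power $2^g$ with $g \ge 2$) on the western edge is immediate, which also matches the geometric statement that no soliton $\fS(2,g)$ ever touches $W_\NNs$. I therefore do not expect any genuine obstacle in this final step; it is a direct corollary. The only subtlety to handle carefully is phrasing, namely that Corollary~\ref{Corolarry1} controls the valuation $v_2\big(t(m,1)\big)$ rather than the literal value of the entry, so the argument must run through the implication ``$t(m,1)=4 \Rightarrow v_2\big(t(m,1)\big)=2$'' rather than attempting to compute $t(m,1)$ directly.
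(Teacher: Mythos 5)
Your reduction to Corollary~\ref{Corolarry1} via the observation that $t(m,1)=4$ would force $v_2\big(t(m,1)\big)=2$ is exactly the argument the paper intends: the corollary is stated as an immediate consequence of Theorem~\ref{TheoremTNNsdoi} through the west-edge valuation bound. Your proposal is correct and matches the paper's approach.
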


\section{The description of $T_\PP$}\label{SectionPP}
The powers of all primes in the decomposition of the terms in the sequence 
\begin{equation*}
\PP=\{1,2,3,2,5,6,7,2,3,10,11,6,13,14,15,2,17,6,19,\dots \}
\end{equation*}
defined in \eqref{eqDefPP} are equal to one. This allows us to employ operations in the the ring of
meromorphic series $\FF_2[[X]]$ to understand the structure of the $p$-tomographies of $T_\PP$.
Thus the initial generation of any $p$-tomography is sequence 
$\cA_p=\{\fp_n\}_{n\ge 1}$ defined by \eqref{eqAP1}. The superposition (component-wise
multiplication) of the $p$-tomographies for all $p$ gives the full description of  matrix
$T_\PP$.

For any prime $p$, we look at matrix $v_p(T_\PP)$. Always, the first row is filled with zeros
except the cells in the arithmetic progression $kp$, $k\ge 1$, which are equal to $1$.  Again, the
prime $p=2$ is exceptional. The second row of  $v_2(T_\PP)$ has all cells equal to $1$ and from the
third row on,  matrix $v_2(T_\PP)$ is filled with zeros only.

We show that if $p$ is odd, the rows can be grouped in periodic slices.
The number of rows in such
a slice is a \textit{period} of $v_p(T_\PP)$ and we denote by $\pi_p$ the length of the smallest
period. 
If $p=2$, the periodic slices contain just one row, which repeats from the third on.
If $p$ is odd, the first row of the first periodic slice is always the second row of 
$v_p(T_\PP)$.

One can check the small periods for some primes: $\pi_2=1$, $\pi_3=3$, $\pi_5=15$, $\pi_7=7$, 
$\pi_{31}=31$, $\pi_{127}=127$. As $p$ increases, the
size of $\pi_p$ becomes large: $\pi_{11}=341$, $\pi_{13}=819$, 
$\pi_{17}=255$, $\pi_{19}=9709$.
This fact produces the  general aspect of randomness of $v_p(T_\PP)$, for $p\ge 11$. 
Also, in some areas this phenomenon is more pronounced than in others  (see
Figure~\ref{FigTomo13-19}).

\begin{figure}[ht]
 \centering
 \mbox{
 \subfigure{
    \includegraphics[width=0.48\textwidth]{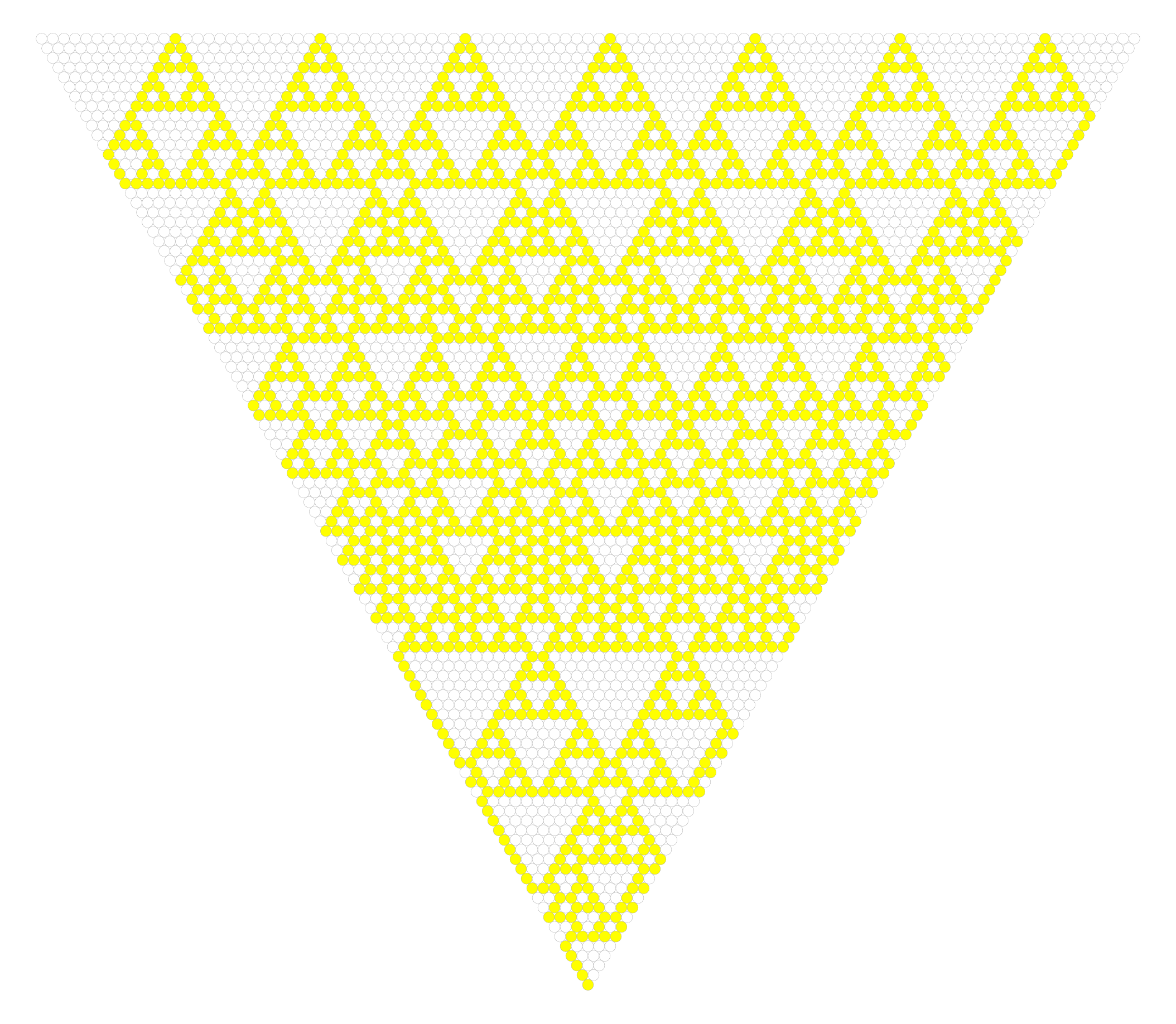}
 }
 \subfigure{
    \includegraphics[width=0.48\textwidth]{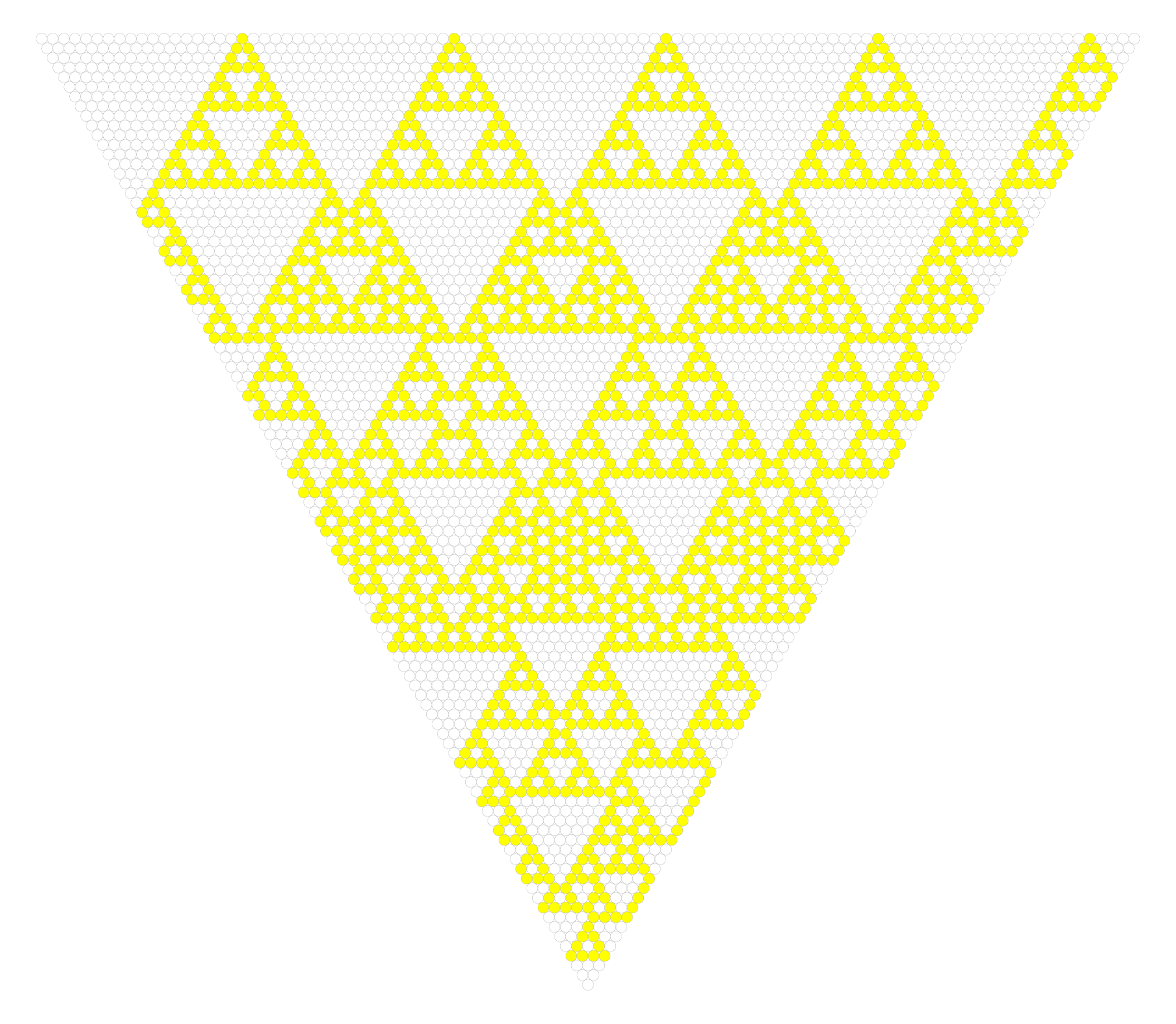}
 }
 }
\caption{The tomographies $v_p(T_{\PP}(100))$, for $p=13$ and $p=19$.
}
 \label{FigTomo13-19}
 \end{figure}

Any sequence $\be=\{e_k\}_{k\ge 1}\subset\{0,1\}^\NN$ can be identified uniquely with a series in
$\FF_2[[X]]^*$. We use this identifications for the rows of the matrix $v_p(T_{\PP,p})$ and write
\begin{equation*}
	\begin{split}
	\be=\{e_k\}_{k\ge 1}\stackrel{\theta}{\longleftrightarrow}
         \theta_\be(X)=\sum_{k\ge 1} e_kX^k\,.
	\end{split}
\end{equation*}
The operation of passing from one generation to the next by applying the
$Z(\cdot,\cdot)$-rule  \eqref{eqZrule} transfers on the side of the series to multiplication by
$\frac{1+X}{X}$. This may produce a series in $\FF_2[[X]]\setminus\FF_2[[X]]^{*}$ and we need to
bring it back by dropping the meromorphic part and the constant term through the $\Delta$ operation:
\begin{equation*}\label{eqDelta}
	\begin{split}
	\Delta(S(X)):=\sum_{k\ge 1}c_kX^k\in\FF_2[[X]]\,,\quad 
	       \text{for $S(X)=\sum  c_kX^k\in\FF_2[[X]]$}\,.
	\end{split}
\end{equation*}
Then, to pass from the $j$th generation to the $(j+m)$th, we have to multiply repeatedly $m$ times
by $\tfrac{1+X}{X}$, so the general correspondence is
\begin{equation}\label{eqCD}
\begin{split}
     \DiagramaComutativa
\end{split}
\raisetag{11mm}
\end{equation}
where $\alpha$ is the absolute value of the differences (which, in this case, coincides with
addition in $\FF_2$), taken component wise.

Next we show that this association is well defined.
\begin{proposition}\label{PropositionCommutes}
     The above association between the rows of the matrix $v_p(T_\PP)$ and the series in
$\FF_2[[X]]^*$ and the operation of passing from one generation to the next is well defined,
 and diagram \eqref{eqCD} is commutative.
\end{proposition}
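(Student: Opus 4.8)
The plan is to verify that each arrow in diagram \eqref{eqCD} is well defined and that the square commutes, by reducing everything to a single computation about how the $\alpha$-operation corresponds to multiplication by $\tfrac{1+X}{X}$ followed by the truncation $\Delta$.

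First I would fix the encoding and check that the map $\theta$ is a genuine bijection between sequences in $\{0,1\}^{\NNs}$ and series in $\FF_2[[X]]^*$: by definition $\theta_{\be}(X)=\sum_{k\ge 1}e_kX^k$, and since the $e_k$ range over $\FF_2$ this is simply the identification of a $0$--$1$ sequence with its generating series with no constant term, which is clearly one-to-one and onto. The content is therefore entirely in the claim that applying $\alpha$ to a row corresponds, under $\theta$, to the operation $\Delta\bigl(\tfrac{1+X}{X}\,\theta_{\be}(X)\bigr)$. Here I would emphasize that for a $0$--$1$ sequence the absolute difference $|e_{k+1}-e_k|$ equals $e_{k+1}+e_k$ in $\FF_2$, which is exactly why the action on series is additive and linear.

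The key step is the following direct calculation. Writing $\theta_{\be}(X)=\sum_{k\ge 1}e_kX^k$, I would compute
\begin{equation*}
     \frac{1+X}{X}\,\theta_{\be}(X)=\frac{1}{X}\sum_{k\ge 1}e_kX^k+\sum_{k\ge 1}e_kX^k
        =\sum_{k\ge 1}e_kX^{k-1}+\sum_{k\ge 1}e_kX^{k}
        =\sum_{k\ge 0}(e_k+e_{k+1})X^{k},
\end{equation*}
with the convention $e_0=0$, so that the term in degree $0$ is $e_1$ and the meromorphic part is absent here. Applying $\Delta$ discards the constant term, leaving $\sum_{k\ge 1}(e_k+e_{k+1})X^k$, whose $k$th coefficient is $e_k+e_{k+1}=|e_{k+1}-e_k|=\alpha(\be)_k$. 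This is precisely $\theta_{\alpha(\be)}(X)$, so the left square of the diagram commutes at the level of one step. I would note that $\Delta$ is well defined on all of $\FF_2[[X]]$ and is $\FF_2$-linear, and that it genuinely lands in $\FF_2[[X]]^*$, so the composite $\be\mapsto\Delta\bigl(\tfrac{1+X}{X}\theta_{\be}\bigr)$ does produce a valid encoding of a $0$--$1$ sequence, which is the well-definedness assertion.

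Finally I would pass from one step to $m$ steps by iteration: since each application of $\alpha$ corresponds to multiplication by $\tfrac{1+X}{X}$ followed by $\Delta$, and since $\Delta$ commutes with the relevant truncations, the $m$-fold composite corresponds to multiplying by $\bigl(\tfrac{1+X}{X}\bigr)^m$ and then applying $\Delta$ once at the end. The only point requiring care is that one may interleave the truncations $\Delta$ freely with the multiplications without changing the final coefficients in positive degree; I expect this bookkeeping to be the main obstacle, since one must confirm that dropping lower-order (meromorphic and constant) terms early never affects the surviving positive-degree coefficients. This follows because multiplication by $\tfrac{1+X}{X}$ shifts degrees down by at most one, so a coefficient in degree $k\ge 1$ of the $m$-step image depends only on coefficients in degrees $\ge 1$ of the intermediate series; hence the discarded terms are irrelevant and the diagram \eqref{eqCD} commutes as claimed.
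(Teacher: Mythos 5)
Your proposal is correct and follows essentially the same route as the paper: the identical one-step computation showing that $\Delta\bigl(\tfrac{1+X}{X}\,\theta_{\be}(X)\bigr)=\sum_{k\ge 1}(e_k+e_{k+1})X^k=\theta_{\alpha(\be)}(X)$, followed by iteration to $m$ steps. Your explicit observation that multiplication by $\tfrac{1+X}{X}$ never pushes non-positive-degree terms into positive degree, so that $\Delta$ may be interleaved freely, is in fact a slightly more careful justification of the induction step than the paper's terse appeal to associativity and $\Delta^{(2)}=\Delta$.
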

\begin{proof}
Well defining is due to the correspondence between the absence of columns to the left of the
first column of $v_p(T_\PP)$, so there is no influence from the left when $\alpha$ is applied, and
from ignoring of the meromorphic and constant terms of the series using the dropping
function $\Delta$  .

It remains to  prove that diagram \eqref{eqCD} is commutative by induction.
The initial step: Suppose $\be_j=(e_1,e_2, e_3, \dots)$. Then, on the one hand, we have:
\begin{equation}\label{eqDR}
	\begin{split}
	\alpha(\be_{j})= \be_{j+1} & = (e_1+e_2, e_2+e_3, e_3+e_4\dots)\,,\\
	  \theta_{\be_{j+1}}(X) & =\sum_{k\ge 1}(e_k+e_{k+1})X^k,
	\end{split}
\end{equation}
and on the other hand
\begin{equation}\label{eqRD}
	\begin{split}
	       \theta_{\be_{j}}(X) & =\sum_{k\ge 1}e_k X^k,\\
	       \tfrac{1+X}{X}\cdot\sum_{k\ge 1}e_k X^k 
		    & = e_1 + \sum_{k\ge 1}(e_k+e_{k+1}) X^k,\\
	       \Delta\Big( e_1 + \sum_{k\ge 1}(e_k+e_{k+1}) X^k\Big) 
		    & = \sum_{k\ge 1}(e_k+e_{k+1})X^k\,.
	\end{split}
\end{equation}
The outcomes of \eqref{eqDR} and  \eqref{eqRD} are identical, so the initial step is completed.

The induction step follows by using the associative property of the composition of functions 
$\Delta$ and multiplication by $\tfrac{1+X}{X}$ and the fact that $\Delta^{(2)}=\Delta$. This
completes the proof of the proposition. 

\end{proof}

Let us see the periodicity of the matrix $v_p(T_\PP)$ in two particular cases.

\subsection{Periodicity of $v_3(T_\PP)$}
The first row of $v_3(T_\PP)$ contains only zeros, except the cells with ones in the columns with
ranks in the arithmetic progression $\{3n\}_{n\ge 1}$.
The series that corresponds to the second row is
\begin{equation}\label{eq3L2}
	\begin{split}
	\Delta\Big(\tfrac{1+X}{X}\cdot\sum_{k\ge 1}X^{3k}\Big)
	  &= (X^2+X^3)\sum_{k\ge 0}X^{3k}.
	\end{split}
\end{equation}
Then the series that corresponds to the $5$th row is
\begin{equation}\label{eq3L5}
	\begin{split}
	\Delta\Big(\left(\tfrac{1+X}{X}\right)^{5-2}(X^2+X^3)\sum_{k\ge 0}X^{3k}\Big)
	  &=	\Delta\Big((1+X+X^2+X^3)\left(\tfrac{1+X}{X}\right)\sum_{k\ge 0}X^{3k}\Big)\\
	  &=	\Delta\Big(\tfrac{1+X^4}{X}\cdot\sum_{k\ge 0}X^{3k}\Big)\\
	  &= (X^2+X^3)\sum_{k\ge 0}X^{3k}.
	\end{split}
\end{equation}
Comparing \eqref{eq3L2} and \eqref{eq3L5}, we see that the $2$nd and the $5$th rows coincide.
Therefore,
the matrix $v_3(T_\PP)$ is eventually periodic and $3$ is the length of a period.

\subsection{Periodicity of $v_5(T_\PP)$}
The series that corresponds to the second row of $v_5(T_\PP)$ is
\begin{equation}\label{eq5L2}
	\begin{split}
	\Delta\Big(\tfrac{1+X}{X}\cdot\sum_{k\ge 1}X^{5k}\Big)
	  &= (X^4+X^5)\sum_{k\ge 0}X^{5k}.
	\end{split}
\end{equation}
We take advantage of the fact that $15$ is a special number and all the binomial coefficients
$\binom{15}{k}$, $0\le k\le 15$, are odd. Then
\begin{equation*}
	\begin{split}
	\left(\tfrac{1+X}{X}\right)^{15}(X^4+X^5)\sum_{k\ge 0}X^{5k}
	  &=	(1+X+\cdots +X^{15})\cdot\tfrac{1+X}{X^{11}}\cdot\sum_{k\ge 0}X^{5k}\\
	  &=	\tfrac{1+X^{16}}{X^{11}}\cdot\sum_{k\ge 0}X^{5k}.
	\end{split}
\end{equation*}
Dropping the meromorphic and the constant term, we find that
\begin{equation*}
	\begin{split}
	\Delta\Big(\left(\tfrac{1+X}{X}\right)^{15}(X^4+X^5)\sum_{k\ge 0}X^{5k}\Big)
	  &= \Delta\Big(  \tfrac{1+X^{16}}{X^{11}}\cdot\sum_{k\ge 0}X^{5k}\Big)\\
	  &= (X^4+X^5)\sum_{k\ge 0}X^{5k}, 
	\end{split}
\end{equation*}
which, compared with \eqref{eq5L2} shows that the $2$nd row coincides with the $16$th. Thus
the matrix $v_5(T_\PP)$ is eventually periodic and $15$ is the length of a period. 
One can check that there is no shorter period. For this, it suffices to calculate the terms from
the first column of the matrix $v_5(T_\PP)$. The first $16$  of them are:
\begin{equation*}
	  v_5(W_\PP(16)) = 
\{ 0, \underbrace{0, 0, 0, 1, 1, 1, 1, 0, 1, 0, 1, 1, 0, 0,  1}_{\text{the west-period }}\}\,.
\end{equation*}

\subsection{Periodicity of $v_p(T_\PP), \ p$ odd}

For a general $p$, we can also take advantage of the fact that there are integers $M$ for which all 
the binomial coefficients $\binom{M}{k}$, $0\le k\le M$ are odd. Such integers do exist, as follows 
from the next simple lemma.

\begin{lemma}\label{LemmaBinomOdd}
For any integer $n\ge 1$, we have
\begin{equation*}
	\begin{split}
	v_2(u)=v_2(v), \  \text{for } 1\le u,v\le 2^{n-1},\ \text{with } u+v=2^n.
	\end{split}
\end{equation*}
\end{lemma}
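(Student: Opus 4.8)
The constraint $u+v=2^n$ means that $v$ is completely determined by $u$, namely $v=2^n-u$, so the assertion is simply that the reflection $u\mapsto 2^n-u$ preserves the $2$-adic valuation on the relevant range. The plan is to factor the full power of $2$ out of $u$ and check directly that the complementary term $2^n-u$ carries exactly the same power of $2$; no induction and no appeal to Kummer's or Lucas' theorem is needed.

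Concretely, I would set $a=v_2(u)$ and write $u=2^a m$ with $m$ odd. Since $1\le u\le 2^n-1$ forces $a\le n-1$, we have $n-a\ge 1$, and therefore
\begin{equation*}
	v=2^n-u=2^n-2^a m=2^a\bigl(2^{n-a}-m\bigr).
\end{equation*}
Here $2^{n-a}$ is even (because $n-a\ge 1$) while $m$ is odd, so $2^{n-a}-m$ is odd. Hence the exact power of $2$ dividing $v$ is $2^a$, i.e.\ $v_2(v)=a=v_2(u)$, which is the claim.

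There is no genuine obstacle here: the lemma is a one-line identity about the $2$-adic valuation, the only point being that the common factor $2^a$ splits off cleanly and leaves an odd residue. Its intended role is to supply, for the value $M=2^n-1$, the cancellation $v_2(M+1-k)=v_2(k)$ coming from $u=k$, $v=2^n-k$; combined with the recursion $\binom{M}{k}=\binom{M}{k-1}\cdot\frac{M+1-k}{k}$ this forces $v_2\bigl(\binom{2^n-1}{k}\bigr)=v_2\bigl(\binom{2^n-1}{k-1}\bigr)=\cdots=0$, so every $\binom{2^n-1}{k}$ is odd. This is exactly the special feature of $M$ exploited in the periodicity argument for $v_p(T_\PP)$ with $p$ odd.
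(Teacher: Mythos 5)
Your argument is correct. The paper itself gives no proof of this lemma -- it is stated as ``the next simple lemma'' and used immediately to deduce \eqref{eqBinom1} -- so there is nothing to compare against; your factorization $u=2^a m$ with $m$ odd, giving $v=2^n-u=2^a(2^{n-a}-m)$ with $2^{n-a}-m$ odd, is the natural one-line verification and is surely what the authors had in mind. One small point worth flagging: as literally printed, the hypothesis $1\le u,v\le 2^{n-1}$ together with $u+v=2^n$ forces $u=v=2^{n-1}$, which would make the lemma vacuous; the intended range is clearly $1\le u,v\le 2^n-1$, which is the reading you (correctly) adopt, and your observation that $a\le n-1$ already follows from $u<2^n$ shows the proof goes through on that corrected hypothesis. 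Your closing remark about the application -- combining $v_2(2^n-k)=v_2(k)$ with the recursion $\binom{M}{k}=\binom{M}{k-1}\cdot\frac{M+1-k}{k}$ for $M=2^n-1$ to conclude that all $\binom{2^n-1}{k}$ are odd -- matches exactly how the paper invokes the lemma to obtain \eqref{eqBinom1}.
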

Then, by Lemma~\ref{LemmaBinomOdd} and the definition of the binomial coefficients , we see that  
\begin{equation}\label{eqBinom1}
	\begin{split}
	\binom{2^n-1}{k}\equiv 1\pmod 2, \ \ \text{for } \ 0\le k\le n\,.
	\end{split}
\end{equation}

Another requirement for $M$ is to be divisible by $p$. 
A minimal value of $n$ for which $M=2^n-1$ is divisible by $p$ is $\ind_p(2)$. (We denote by 
$\ind_p(a)$ the smallest integer $1\le n\le p-1$ for which $a^n\equiv 1\pmod p$.)

Now let $M=dp$ for some integer $d\ge 1$.
The series associated to the second row of $v_p(T_\PP)$ is
\begin{equation}\label{eqS2}
	\begin{split}
	S_2(X)=(X^{p-1}+X^p)\sum_{k\ge 0}X^{pk}\,.
	\end{split}
\end{equation}
To get the series $S_{M+1}(X)$ corresponding to the $(M+1)$th row, we have to multiply $S_2(X)$ by 
$\tfrac{(1+X)^{dp}}{X^{dp}}$. First, let us see that
\begin{equation*}
	\begin{split}
	(X^{p-1}+X^p)\frac{(1+X)^{dp}}{X^{dp}} & = \frac{1+X}{X^{dp-p+1}}(1+X+\cdots+X^{dp})
		= \frac{1+X^{dp+1}}{X^{dp-p+1}}\,.
	\end{split}
\end{equation*}
Then
\begin{equation*}
	\begin{split}
	S_2(X)\cdot \frac{(1+X)^{dp}}{X^{dp}} 
	& = \frac{1+X^{dp+1}}{X^{dp-p+1}} \cdot \sum_{k\ge 0}X^{pk}\\
	& = 	\sum_{k\ge 0}X^{pk-dp+p-1}	+ \sum_{k\ge 0}X^{pk+p}	.
	\end{split}
\end{equation*}
Here the meromorphic and constant terms occur only in the first sum. Dropping them, we arrive at
\begin{equation*}
	\begin{split}
	S_{M+1}(X) = \Delta\Big( \sum_{k\ge 0}X^{pk-dp+p-1}	+ \sum_{k\ge 0}X^{pk+p}\Big)
		= S_2(X)\,.
	\end{split}
\end{equation*}
In conclusion, we have proved the following theorem.

\begin{theorem}\label{TheoremPeriodicity}
     For any prime $p\ge 3$, the rows of the matrix $v_p(T_\PP)$ are eventually periodic. The 
pre-period contains only the first row of the matrix and the length of the smallest period is a 
divisor of $2^{\ind_p(2)}-1$.
\end{theorem}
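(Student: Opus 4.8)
The plan is to carry out the entire argument at the level of the exponent matrix $v_p(T_\PP)$, transported to the ring $\FF_2[[X]]$ through the correspondence $\theta$ of diagram~\eqref{eqCD}. By Proposition~\ref{PropositionCommutes}, passing from one generation to the next is realized by the single operator $\Phi(S)=\Delta\!\big(\tfrac{1+X}{X}\,S\big)$ on $\FF_2[[X]]^*$, and passing from the second row to row $2+m$ is its iterate $\Phi^m$. Since the first row is the arithmetic-progression series $\sum_{k\ge 1}X^{pk}$ and the second row is the series $S_2(X)$ of \eqref{eqS2}, proving eventual periodicity reduces to exhibiting an integer $M$ with $\Phi^M(S_2)=S_2$; the first row will be handled separately as the candidate pre-period.

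The crux is to choose $M$ so that $\big(\tfrac{1+X}{X}\big)^M$ acts trivially on $S_2$ after the drop $\Delta$. First I would demand that $(1+X)^M=1+X+\cdots+X^M$ in $\FF_2[X]$, that is, that every $\binom{M}{k}$ be odd; by \eqref{eqBinom1}, a consequence of Lemma~\ref{LemmaBinomOdd}, this holds precisely when $M=2^n-1$. Secondly, to keep intact the period $p$ of the arithmetic progression in the column index, I would require $p\mid M$. Because $p$ is odd, $2$ is a unit modulo $p$, so the least $n$ with $p\mid 2^n-1$ is the multiplicative order $\ind_p(2)$; I then set $M=2^{\ind_p(2)}-1=dp$.

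With this $M$ the computation is routine. The telescoping identity $(X^{p-1}+X^p)(1+X)^M X^{-M}=\tfrac{1+X^{M+1}}{X^{M-p+1}}$ turns $\big(\tfrac{1+X}{X}\big)^M S_2$ into a sum of two geometric series: one supported on exponents $\equiv -1\pmod p$, whose non-positive part is exactly what $\Delta$ discards, leaving $X^{p-1}\sum_{k\ge 0}X^{pk}$, and one equal to $X^{p}\sum_{k\ge 0}X^{pk}$; their sum is again $S_2$. Hence $\Phi^M(S_2)=S_2$, and since every later row is $\Phi^{j-2}(S_2)$ and $\Phi^M$ commutes with the iterates of $\Phi$, the rows of $v_p(T_\PP)$ are periodic with period $M$ from the second row on. To see that the pre-period is precisely the first row, I would check that the arithmetic-progression series of row $1$ differs from row $M+1$; both are preimages of $S_2$ under $\Phi$, and $\Phi$ has two-element fibres (two preimages differ by the all-ones series $\sum_{k\ge 1}X^k$), so it suffices to note that they lie in different fibres, equivalently that their first coefficients disagree. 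Finally, that the smallest period $\pi_p$ divides $2^{\ind_p(2)}-1$ is the standard fact that the set of periods of an eventually periodic sequence is closed under $\gcd$, whence the minimal period divides every period.

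I expect the main obstacle to be the choice of $M$ rather than any single calculation: the two constraints — all binomial coefficients odd, forcing $M=2^n-1$, and the divisibility $p\mid M$ — pull in different directions, and it is exactly the order $\ind_p(2)$ that reconciles them. The second delicate point is the bookkeeping for $\Delta$: one must verify that the discarded terms are precisely the meromorphic and constant ones and that the surviving terms recombine into $S_2$, which is guaranteed by the congruence $pk-M+p-1\equiv -1\pmod p$.
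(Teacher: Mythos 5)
Your proposal is correct and follows essentially the same route as the paper: choose $M=2^{\ind_p(2)}-1=dp$ so that all $\binom{M}{k}$ are odd and $p\mid M$, telescope $(X^{p-1}+X^p)(1+X)^M$ into $1+X^{M+1}$, and check that $\Delta$ returns $S_2$, giving period $M$ from the second row on. The only extras you add (the fibre argument for why row $1$ is a genuine pre-period, and the divisibility of the minimal period) are points the paper leaves implicit, and they are fine apart from the slip that row $1$ and row $M+2$ lie in the \emph{same} fibre of $\Phi$ over $S_2$ and differ by the all-ones series, not in different fibres.
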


We remark that $2^{\ind_p(2)}-1$ is not always the size of the smallest period. For example, if 
$p=11$,
$\ind_{11}(2)=10$ and $2^{10}-1=1023=3\cdot 11\cdot 31$, but  the length of the smallest period is 
$\pi_{11}=11\cdot 31=341$.
 Also, if $p=13$, $\ind_{13}(2)=12$ and $2^{12}-1=4095=3^2\cdot 5\cdot 7\cdot 13$, but  the length 
of the 
smallest period is 
$\pi_{13}=3^2\cdot 7\cdot 13=819$.
As well, if $p=19$, $\ind_{19}(2)=18$ and $2^{18}-1=262143=3^3\cdot 7\cdot 19\cdot 73$. In 
this case, again,  the length of the smallest period is  shorter, 
$\pi_{19}=(2^{18}-1)/3^3=9709$.

\medskip 

There are two classes of primes: the first one, for which the length of the period of 
$v_p(T_\PP)$ is maximal (that is, $2^{\ind_p(2)}-1$) and the second one,
for which the length of the period is strictly smaller than $2^{\ind_p(2)}-1$.
We do not know if either one or both of these classes contain infinitely many 
primes.

The reason for the shorter periods in these cases are the arithmetic properties that produce
favorable patterns in the series of binomial coefficients. Thus, writing the binomial coefficients 
\mbox{$\binom{H}{k}\pmod 2$,} $0\le k\le H$, as concatenated letters of a word and the repeated 
letters as 
powers, for 
$H=341$, they are:
	\begin{equation*}
	\begin{split}
%
%
%
1^20^21^20^{10}1^20^21^20^{42}1^20^21^20^{10}1^20^21^20^{170}1^20^21^20^{10}1^20^21^20^{42} 
1^20^21^20^{10}1^20^21^2
	\end{split}
\end{equation*}
and for $H=819$, they are:
\begin{equation*}
	\begin{split}
	1^40^{12}1^40^{12}1^40^{12}1^40^{204}
1^40^{12}1^40^{12}1^40^{12}1^40^{204}
1^40^{12}1^40^{12}1^40^{12}1^40^{204}
1^40^{12}1^40^{12}1^40^{12}1^4.
	\end{split}
\end{equation*}
A related  pattern appears if $p=19$.
In this case $H= 9709$ 
and the word defined by the  binomial coefficients is 
\begin{equation*}
	\begin{split}
	(AB)^{16} 
0^{512} 
(AB)^{16}
0^{6638}
(BA)^{16}
0^{512}
(BA)^{16},
	\end{split}
\end{equation*}
where
$A=1^20^2 1^20^2 1^2 0^2 1^2$ and
$B=0^{18}$.

\section{Extreme values on the West Side of $W_\PP$}
Calculations using power series from $\FF_2[[X]]$ allows us to quickly find a particular element 
of  matrix $T_\PP$. In particular, we can find the ``extreme values'' of $W_\PP(m)$, $m\ge 1$. 
They emerge on the $m$th row of $T_\PP$ in 
places where number $m$, when written in base two, has either few or many ones, compared with the 
rank of rows in its neighborhood.
One can notice this property in the augmented oscillations of both graphs in 
Figure~\ref{FigLoP}.
In the following, we present the concrete structure of the most pronounced extremes, the values of
$W_\PP(m)$, with $m$ around powers of two.
We have to consider only the influence of odd primes, since $p=2$ is involved only on the first 
two rows of $T_\PP$.

\subsection{The size of $W_\PP(2^g)$}\label{Subsection1}
Let $m=2^g-1$. By \eqref{eqBinom1} we know that $\binom{m}{j}\equiv 1\pmod 2$, for $0\le j\le m$.
To find the series associated to the $(m+1)$th row of the  $p$-topography of $T_\PP$, we have 
to multiply:
\begin{equation}\label{eqhA}
	\begin{split}
	\left(\tfrac{1+X}{X}\right)^m\sum_{k\ge 1}X^{kp}
	=& \tfrac{1}{X^m}(1+X+\cdots+X^m)\sum_{k\ge 1}X^{kp}\\
	=&	(X^{p-m}+X^{p-m+1}+\cdots +X^{p}) + (X^{2p-m}+X^{2p-m+1}+\cdots +X^{2p}) +\\
		& + (X^{3p-m}+X^{3p-m+1}+\cdots +X^{3p}) +\cdots
	\end{split}
\end{equation}
Then, $p$  divides $W_\PP(2^g)$ if and only if the coefficient of $X$ in series \eqref{eqhA} is 
odd. We see that primes $p\ge m+2$ are not involved and $p=m+1$ is impossible.

For any small primes $3\le p\le m$, denote by $\lambda=\lambda(p,m)\ge 1$ the largest integer for 
which there exist integers $0\le s_1, s_2,\dots, s_{\lambda}\le m$, such that
\begin{equation}\label{eqDef_Lp}
	\begin{split}
	1&=p+s_1-m,\\
	1&=2p+s_2-m,\\[-0.65em]
	  &\ \, \vdots\\[-0.35em]
	1&=\lambda p+s_{\lambda}-m.\\
	\end{split}
\end{equation}
Note that $\lambda(p,m)$ exists and $\lambda(p,m)\le (m+1)/p$.
Then,  monomial $X$ appears in the $\FF_2[[X]]$ series \eqref{eqhA} if and only if 
$\lambda(p,m)$  
is odd.

For example, if $m=15$, by a simple investigation we find that $\lambda(7,15)$ is even and  
$\lambda(p,15)$ is odd for $p=3, 5, 11, 13$, so $W_\PP(16)=3\cdot 5\cdot 11\cdot 13=2145$.
In the same way, if $m=31$, we see that $\lambda(p,31)$ is 
even only for $p=3,5,7,11,13$, 
so $W_\PP(32)=17\cdot 19\cdot 23\cdot 29\cdot 31=6678671$.
\subsection{The maximum $W_\PP(2^g-1)$}\label{Subsection2}
Let $m=2^g-2$ with $g\ge 2$. Then, in $\FF_2[[X]]$ we have
\begin{equation*}
	\begin{split}
	(1+X)^m=1+X^2+X^4+\cdots+X^m\,.
	\end{split}
\end{equation*}
Finding the series that corresponds to the $(2^g-1)$th row of $T_\PP$ implicates the calculation:
\begin{equation}\label{eqhB}
	\begin{split}
	\left(\tfrac{1+X}{X}\right)^m\sum_{k\ge 1}X^{kp}
	=& \tfrac{1}{X^m}(1+X^2+\cdots+X^m)\sum_{k\ge 1}X^{kp}\\
	=&	(X^{p-m}+X^{p-m+2}+\cdots +X^{p}) + (X^{2p-m}+X^{2p-m+2}+\cdots +X^{2p}) +\\
		& + (X^{3p-m}+X^{3p-m+2}+\cdots +X^{3p}) +\cdots
	\end{split}
\end{equation}	
Again, we have to look for terms whose power of $X$  is equal to one.
In series \eqref{eqhB}, the terms corresponding to primes $p\ge m+2$ do not contribute to the 
coefficient of $X$. Also, $p$ can not be equal to $m$, because $m$ is even. 

If $2^g-1$ is a Mersenne prime, then $p=m+1$ is equal with this prime. Then $X^{p-m}=X$, so $p$  
divides $W_\PP(2^g-1)$.

For the remaining primes
$3\le p < m$, let $\mu=\mu(p,m)$ be the the maximal number of equalities 
\begin{equation}\label{eqDef_Mp}
	\begin{split}
	1&=p+t_1-m,\\
	1&=2p+t_2-m,\\[-0.65em]
	  &\ \, \vdots\\[-0.35em]
	1&=\mu p+t_{\mu}-m,
	\end{split}
\end{equation}
where $t_1, t_2,\dots, t_\mu$ are even numbers that belong to $\{0,2,\dots,m\}$.
Notice that $\mu(p,m)\le (m+1)/p$.
Then monomial $X$ effectively appears in  series \eqref{eqhB} if and only if 
$\mu(p,m)\equiv 1\pmod 2$. Therefore $p \mid W_\PP(2^g-1)$ if and only if $\mu(p,m)$ is odd.

Examples:
If $m=14$, we find that 
$\mu(3,14)=3$; $\mu(5,14)=2$; and $\mu(7,14)=\mu(11,14)=\mu(13,14)=1$, so 
$W_\PP(15)= 3\cdot 7\cdot 11\cdot 13 =3003$.

If  $m=30$, $p=m+1$ is a Mersenne prime. For the smaller primes, we find that 
$\mu(3,30)=5$; $\mu(5,30)=3$; $\mu(7,30)=2$; and
$\mu(11,30)=\mu(13,30)=\mu(19,30)=\mu(23,30)=\mu(29,30)=1$.
This implies that 
$W_\PP(31)= 3\cdot 5 \cdot11\cdot 13 \cdot 17  \cdot 19 \cdot 23 \cdot 29 \cdot 31=14325749295$.

\subsection{The minimum $W_\PP(2^g+1)$}\label{Subsection3}
Let $m=2^g$.
The smaller numbers on the west side of $T_\PP$ appear on the rows of rank $2^g+1$. This is due to 
the fact that $(1+X)^m=1+X^m$ in $\FF_2[[X]]$, that is, the binomial $(1+X)^m$  has fewest possible 
terms. Then, the series that correspond to the $(2^g+1)$th row sums the terms of positive powers 
of $X$ from the following
\begin{equation}\label{eqhC}
	\begin{split}
	\left(\tfrac{1+X}{X}\right)^m\sum_{k\ge 1}X^{kp}
	=& \tfrac{1}{X^m}(1+X^m)\sum_{k\ge 1}X^{kp}\\
	=&	(X^{p-m}+X^{p}) +(X^{2p-m}+X^{2p}) + (X^{3p-m}+X^{3p}) + \cdots
	\end{split}
\end{equation}	
For a given $p\ge 3$, on the right-hand side of \eqref{eqhC} may appear a single monomial $X$, 
and this happens 
whenever there exists an integer $d\ge 1$, such that $dp - m = 1$. 
This implies that the only prime divisors $p$ of  $W_\PP(2^g+1)$ are those for which 
if $p \mid (m+1)$.

For example, $W_\PP(9)=3$;  $W_\PP(17)=17$; $W_\PP(33)=33$ and 
$W_\PP(1025)=5\cdot 41 = 205$ (because $1025=5^2\cdot 41$); 
$W_\PP(32769)=3\cdot 11 \cdot 331 = 10923$ (because $32769=2^{15}+1=3^2\cdot 11\cdot 331$).

\medskip\medskip

Other terms of sequence $W_\PP$ may be calculated in the same way. A few more examples are 
listed in Table~\ref{TableWest2lag}.

\begin{table}[!htbp]
\small\tiny
\centering
\caption{The size of $W_\PP(m)$ for $m$ around $2^g$}\label{TableWest2lag}
$
\begin{array}{*5c}
\toprule
\text{power} &  m &  W_\PP(m) & \text{decomposition of }W_\PP(m) & \omega(W_\PP(m))\\
	\midrule
\multirow{5}{*}{$g=6$}	
   & 62   & 3.49 \cdot 10^{9}   & 23 \cdot 31 \cdot 37 \cdot 41 \cdot 53  \cdot 61 & 6\\
   &  63 & 2.79\cdot 10^{18}
  &  3 \cdot 7 \cdot 11 \cdots 59 \cdot 61  & 13\\
   &  64 & 4.36\cdot 10^{16}  & 3 \cdot 7 \cdot 11 \cdots 59 \cdot 61   & 12\\	
   &  65 & 65   & 5 \cdot 13  & 2\\	
   &  66 &  2145  & 3 \cdot 5 \cdot 11  \cdot 13 & 4\\
			\midrule
\multirow{5}{*}{$g=7$}	
   & 126   & 2.42\cdot 10^{21}
	& 3 \cdot 5 \cdot 7 \cdots 109 \cdot 113  & 14\\
   &  127 & 7.87\cdot 10^{39}  & 3 \cdot 5 \cdot 7 \cdots 113 \cdot 127  & 24\\
   &  128 & 1.45\cdot 10^{34}  & 5 \cdot 11 \cdot 13 \cdots 113 \cdot 127   & 20\\	
   &  129 & 129   & 3 \cdot 43  & 2\\	
   &  130 &  8385  & 3 \cdot 5 \cdot 13 \cdot 43   & 4\\
			\midrule
\multirow{5}{*}{$g=8$}	
   & 254   & 6.86\cdot 10^{28} & 103 \cdot 107  \cdot 127 \cdots 233 \cdot 241  & 13\\
   &  255 & 4.20\cdot 10^{76}  & 3 \cdot 19 \cdot 37 \cdots 241 \cdot 251  & 37\\
   &  256 & 1.17\cdot 10^{72}  & 3 \cdot 5 \cdot 11 \cdots 241 \cdot 251   & 37\\	
   &  257 & 257   & 257   & 1\\	
   &  258 &  33153  &  3 \cdot 43 \cdot 257   & 3\\
					\midrule
\multirow{5}{*}{$g=9$}	
   & 510   & 5.17\cdot 10^{92} & 3 \cdot 11  \cdot 19 \cdots 461 \cdot 509  & 42\\
   &  511 & 4.35\cdot 10^{168}  & 3 \cdot 5 \cdot 7 \cdots 503 \cdot 509  & 74\\
   &  512 & 8.03\cdot 10^{147}  & 7 \cdot 13 \cdot 29 \cdots 503 \cdot 509   & 63\\	
   &  513 & 57   & 3\cdot 19   & 2\\	
   &  514 &  14649  & 3 \cdot 19 \cdot 257   & 3\\
							\midrule
\multirow{5}{*}{$g=10$}	
   & 1022   & 9.32\cdot 10^{173} & 7 \cdot 71  \cdot 109 \cdots 1013 \cdot 1021  & 65 \\
   &  1023 & 2.53\cdot 10^{344}  & 3 \cdot 7 \cdot 11 \cdots 1019 \cdot 1021  & 132 \\
   &  1024 & 4.72\cdot 10^{298}  & 3 \cdot 11 \cdot 19 \cdots 1019 \cdot 1021   & 115 \\	
   &  1025 & 205   & 5\cdot 41   & 2\\	
   &  1026 &  11685  & 3 \cdot 5 \cdot 19  \cdot 41  & 4\\
\bottomrule
\end{array}
$
\end{table} 
 
\medskip
We collect the results from Sections~\ref{Subsection1}-\ref{Subsection3} into the next theorem.

 \medskip\medskip \medskip
 
\begin{theorem}\label{TheoremWPP2lap}
     Let $g\ge 2$ and let $\lambda=\lambda(p,2^g-1)$ and $\mu=\mu(p,2^g-2)$ be the integers defined 
by \eqref{eqDef_Lp} and \eqref{eqDef_Mp}.
Then

\begin{equation*}
	\begin{split}
	W_\PP(2^g-1)=\prod_{\substack{0\le p \le 2^g-3\\ \mu(p,2^g-2) \text{ odd}}}p;\quad
	W_\PP(2^g)=\prod_{\substack{0\le p \le 2^g-1\\ \lambda(p,2^g-1) \text{ odd}}}p;\quad	
	W_\PP(2^g+1)=\prod_{p \mid 2^g+1}p\,.\ \ 		
	\end{split}
\end{equation*}

\end{theorem}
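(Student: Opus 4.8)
The plan is to prove the three product formulas by directly combining the power-series computations already carried out in Subsections~\ref{Subsection1}--\ref{Subsection3}, which is essentially a matter of assembling what has been established. The common framework is fixed: for each odd prime $p$, the second row of the $p$-tomography $v_p(T_\PP)$ corresponds to the series $\sum_{k\ge 1}X^{kp}$ (the first row being the indicator of multiples of $p$), and passing from the first row to the $(m+1)$th row amounts to multiplying by $\left(\tfrac{1+X}{X}\right)^m$ and then applying $\Delta$, by Proposition~\ref{PropositionCommutes}. The value $v_p\big(W_\PP(m+1)\big)$ is therefore the coefficient of $X^1$ in the resulting series, reduced mod~$2$; and since all exponents in $\PP$ are at most one, $p\mid W_\PP(m+1)$ precisely when that coefficient is odd, while $W_\PP(m+1)=\prod_{p:\, p\mid W_\PP(m+1)}p$.

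With this reduction in place, each of the three cases is handled by specializing $m$ and reading off the coefficient of $X$. First I would treat $W_\PP(2^g-1)$: here $m=2^g-2$ is even, so $(1+X)^m=1+X^2+\cdots+X^m$ in $\FF_2[[X]]$, and expansion~\eqref{eqhB} shows that the coefficient of $X$ counts the solutions of the system~\eqref{eqDef_Mp}, whose number is exactly $\mu(p,2^g-2)$. Hence $p\mid W_\PP(2^g-1)$ iff $\mu(p,2^g-2)$ is odd, giving the first product (the Mersenne-prime boundary case $p=m+1$ is absorbed into the same parity criterion, as noted in Subsection~\ref{Subsection2}). Next, for $W_\PP(2^g)$ I would set $m=2^g-1$, so that $\binom{m}{j}\equiv 1\pmod 2$ for all $0\le j\le m$ by~\eqref{eqBinom1}; expansion~\eqref{eqhA} then makes the coefficient of $X$ equal to the number of solutions of~\eqref{eqDef_Lp}, namely $\lambda(p,2^g-1)$, yielding the middle product. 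Finally, for $W_\PP(2^g+1)$ I would put $m=2^g$, use $(1+X)^m=1+X^m$, and read from~\eqref{eqhC} that each prime contributes at most a single monomial $X$, occurring exactly when $dp-m=1$ for some $d\ge 1$, i.e. when $p\mid 2^g+1$; this gives the last product.

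The one point demanding genuine care, rather than mere bookkeeping, is the parity count of the monomial $X^1$ in the multiple sums~\eqref{eqhA} and~\eqref{eqhB}. The subtlety is that $\mu(p,m)$ and $\lambda(p,m)$ are defined as \emph{numbers of solutions} of the respective systems, and one must verify that these count each occurrence of the exponent $1$ without spurious collisions or cancellations beyond those recorded mod~$2$. Concretely, in~\eqref{eqhB} the $k$th block contributes exponents $kp-m, kp-m+2, \ldots, kp$, and I must confirm that the exponent $1$ can arise in at most one way within a block (forcing $t_k$ even and uniquely determined) and that distinct blocks contribute independently, so that the total multiplicity of $X$ is precisely $\mu(p,m)$ before reduction mod~$2$; the identical reasoning applies to~\eqref{eqhA} with $\lambda$. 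Once this exact-multiplicity claim is pinned down, the parity criterion and the three formulas follow immediately, and the numerical instances already tabulated in Table~\ref{TableWest2lag} serve as consistency checks on the statement.
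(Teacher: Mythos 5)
Your proposal is correct and follows essentially the same route as the paper, which itself obtains the theorem by collecting the series computations of Subsections~\ref{Subsection1}--\ref{Subsection3}: multiply the first-row series $\sum_{k\ge 1}X^{kp}$ by $\bigl(\tfrac{1+X}{X}\bigr)^m$ for $m=2^g-2,\,2^g-1,\,2^g$, and read off the parity of the coefficient of $X$ via $\mu$, $\lambda$, and the divisibility condition $p\mid 2^g+1$ respectively. Your added verification that each block of \eqref{eqhA} and \eqref{eqhB} contributes the exponent $1$ at most once, so the multiplicity of $X$ is exactly $\lambda(p,m)$ resp.\ $\mu(p,m)$ before reduction mod $2$, is a worthwhile explicit justification of a step the paper leaves implicit.
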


 \medskip
 
\begin{paradoxproblem}\label{Paradox1}
	Explain why $W_\PP(2^g-1)$ is larger than $W_\PP(2^g)$,
		even if in the definition of $\mu(p,m)$, in equalities \eqref{eqDef_Mp}, an extra parity 
restriction on numbers $t_j$ is imposed (condition that is absent for the existence of numbers 
$s_j$ in \eqref{eqDef_Lp}).   
\end{paradoxproblem}

\medskip\medskip

Removing duplicates and ordering $W_\PP$, we obtain  sequence
\begin{equation}\label{eqSortedPP}
	\begin{split}
UO(W_\PP) : \ &1, 2, 3, 5, 11, 15, 17, 33, 35, 51, 57, 65, 91, 105, 129, 165, 195, 205, 221, \\
	&255, 257, 385, 451, 561, 861, 897, 969, 1615,\dots
	\end{split}
\end{equation}
This is related and has terms close to those of the analogues sequence~\cite[A222313]{OEIS}, 
\cite[Question 3]{CZ2014}
obtained by starting with the initial generation $\NNs$ 
instead of $\PP$. A complete discussion based on the previous analysis might give a complete 
argument for the certainty of the ranks of terms in list \eqref{eqSortedPP}.

\section{The West-Side of $T_\PP$ and $T_\NNs$}\label{SectionWestPP}
By Theorem~\ref{TheoremPeriodicity} it follows that sequence $v_p(W_\PP)$, the west edge of the 
matrix $v_p(T_\PP)$, is also periodic, for any odd prime and the pre-period contains only the 
first term of the sequence. We do not know whether there is a prime $p$ for which the length of 
the 
period of $v_p(W_\PP)$ is strictly smaller than the length of the period of $v_p(T_\PP)$. 
If there is such a prime, then it should be larger than $23$.

Comparing the general aspect of the $p$-tomographies of $T_\NNs$ and $T_\PP$, one can observe both 
similarities and significant differences. 
Thus, on the one hand, although there are more and more irregularities in $v_p(T_\PP)$ as $p$ 
increases, it is still eventually periodic. On the other hand, a big noise grows under the cells 
with 
larger and larger powers of $p$, if the initial generation is $\NNs$.
The most noticeable difference is if $p = 2$, since $v_2(T_\PP)$ has only zero-cells from the third 
row on, while $v_2(T_\NNs)$ sprouts the triangles in Figure~\ref{FigTomo2}. For small powers of 
$p=3$ and $p=5$, the results are shown side by side in Figures~\ref{FigTomo3} and \ref{FigTomo5}.
\begin{figure}[ht]
 \centering
 \mbox{
 \subfigure{
    \includegraphics[width=0.48\textwidth]{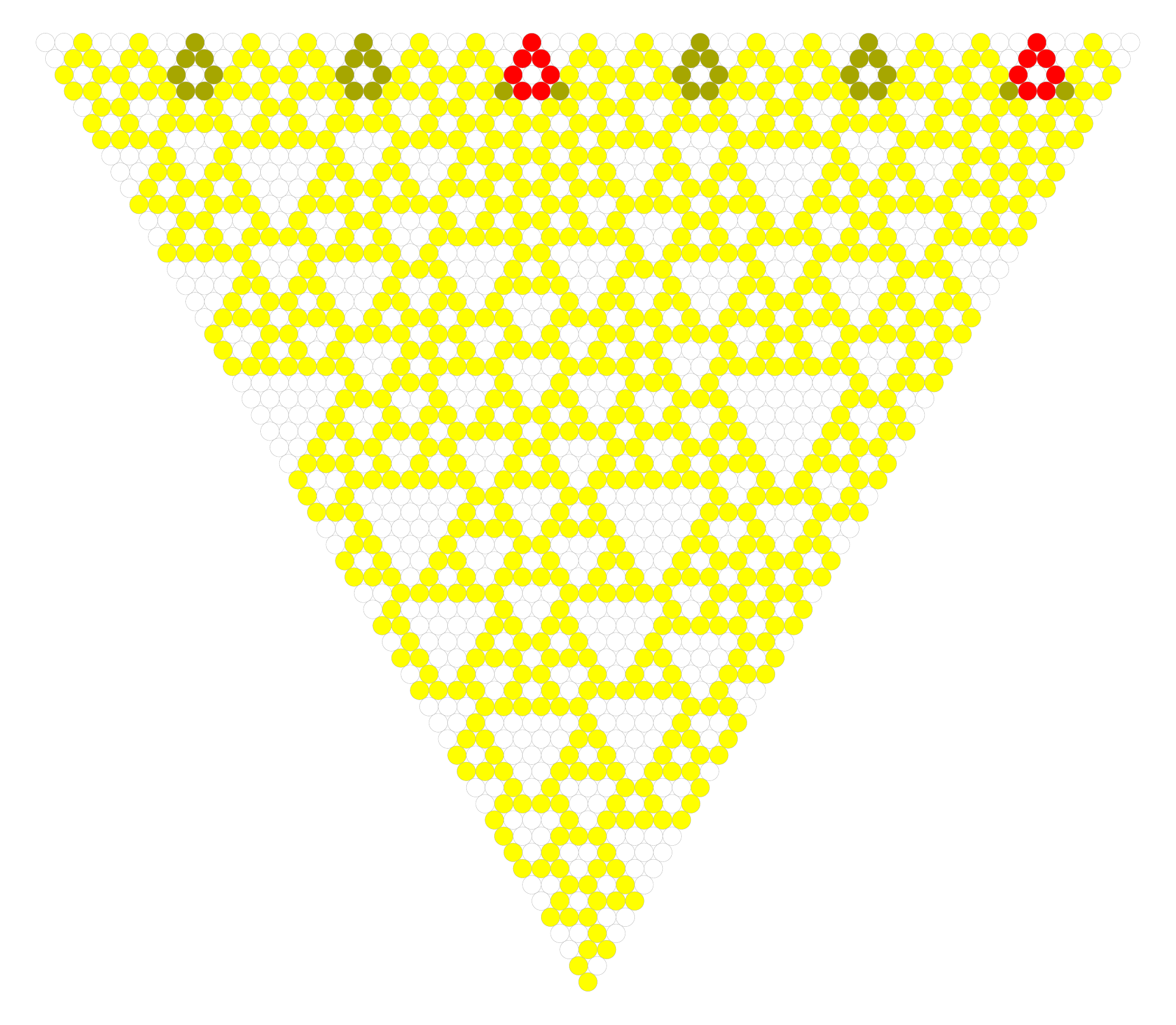}
 \label{FigTomo3N}
 }
 \subfigure{
    \includegraphics[width=0.48\textwidth]{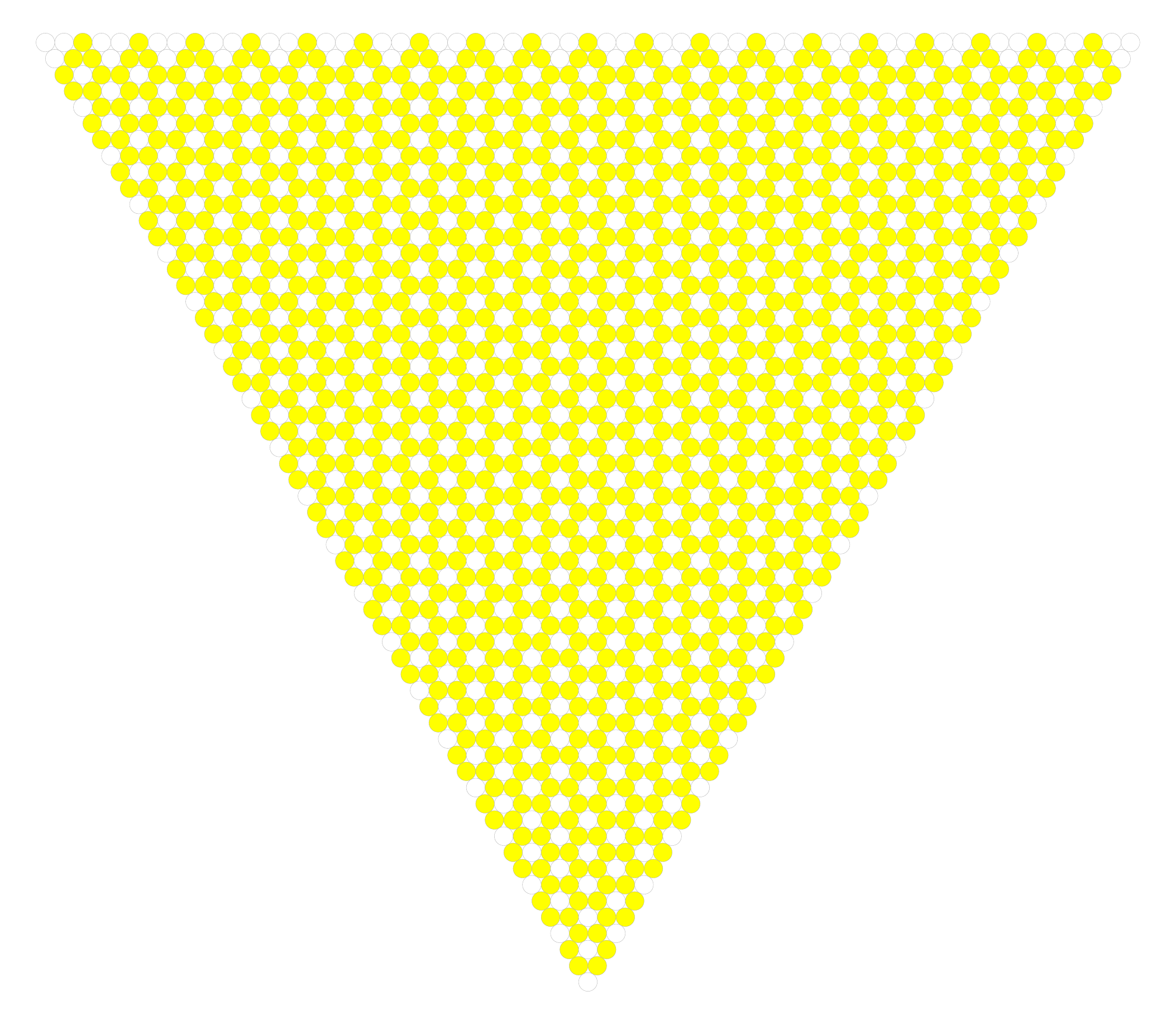}
 \label{FigTomo3P}
 }
 }
 \caption{The tomographies of  $T_{\NNs}(60)$ (left) and $T_{\PP}(60)$ (right), for $p=3$.
	  \vspace*{-.11em}  \\
     	  \mbox{\small Code of colors for the cells containing the powers of $p$, 
from $0,1,2,3$: \protect \Legend_0-3}	
}
 \label{FigTomo3}
 \end{figure}

\begin{figure}[ht]
 \centering
 \mbox{
 \subfigure{
    \includegraphics[width=0.48\textwidth]{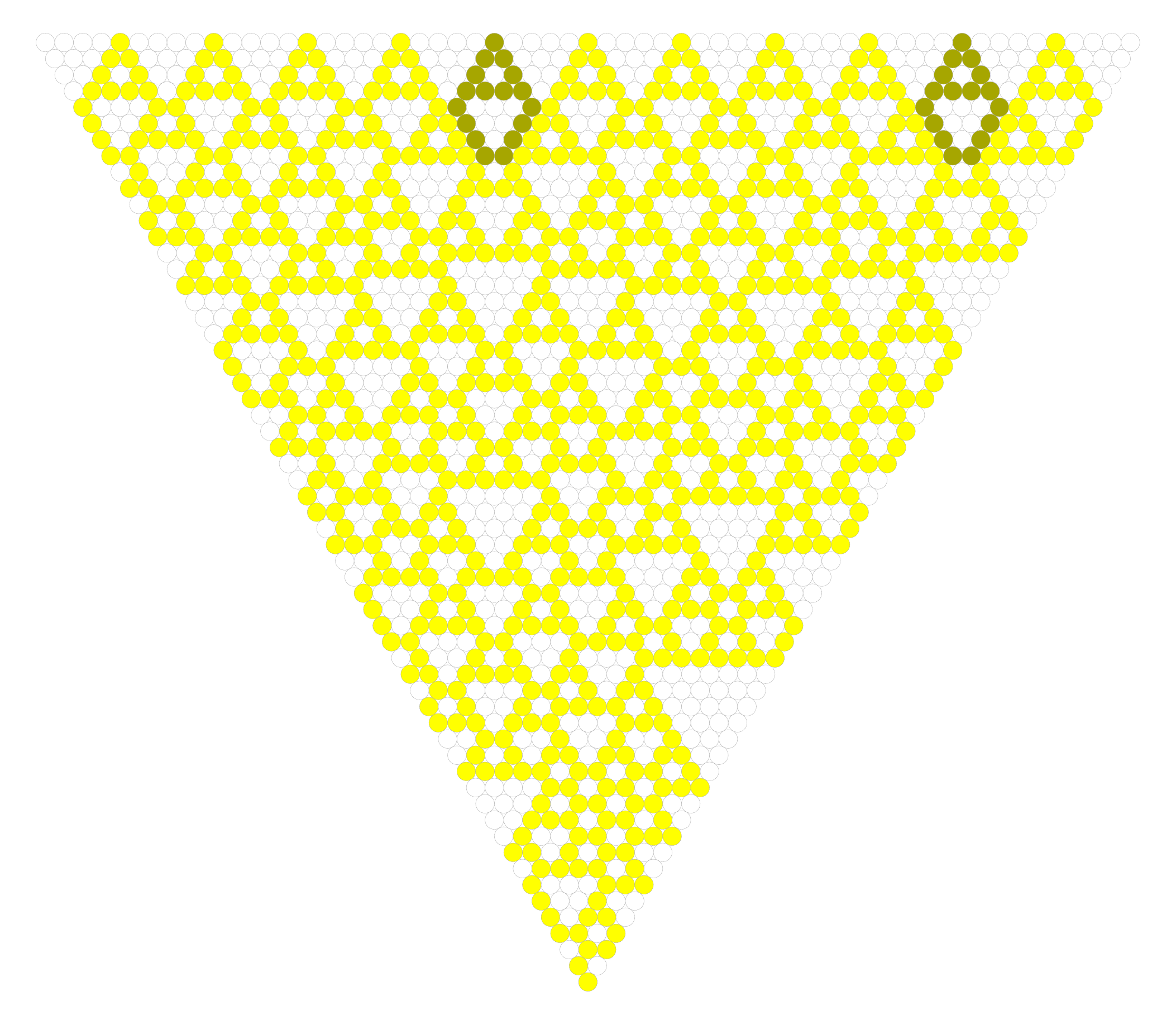}
 \label{FigTomo5N}
 }
 \subfigure{
    \includegraphics[width=0.48\textwidth]{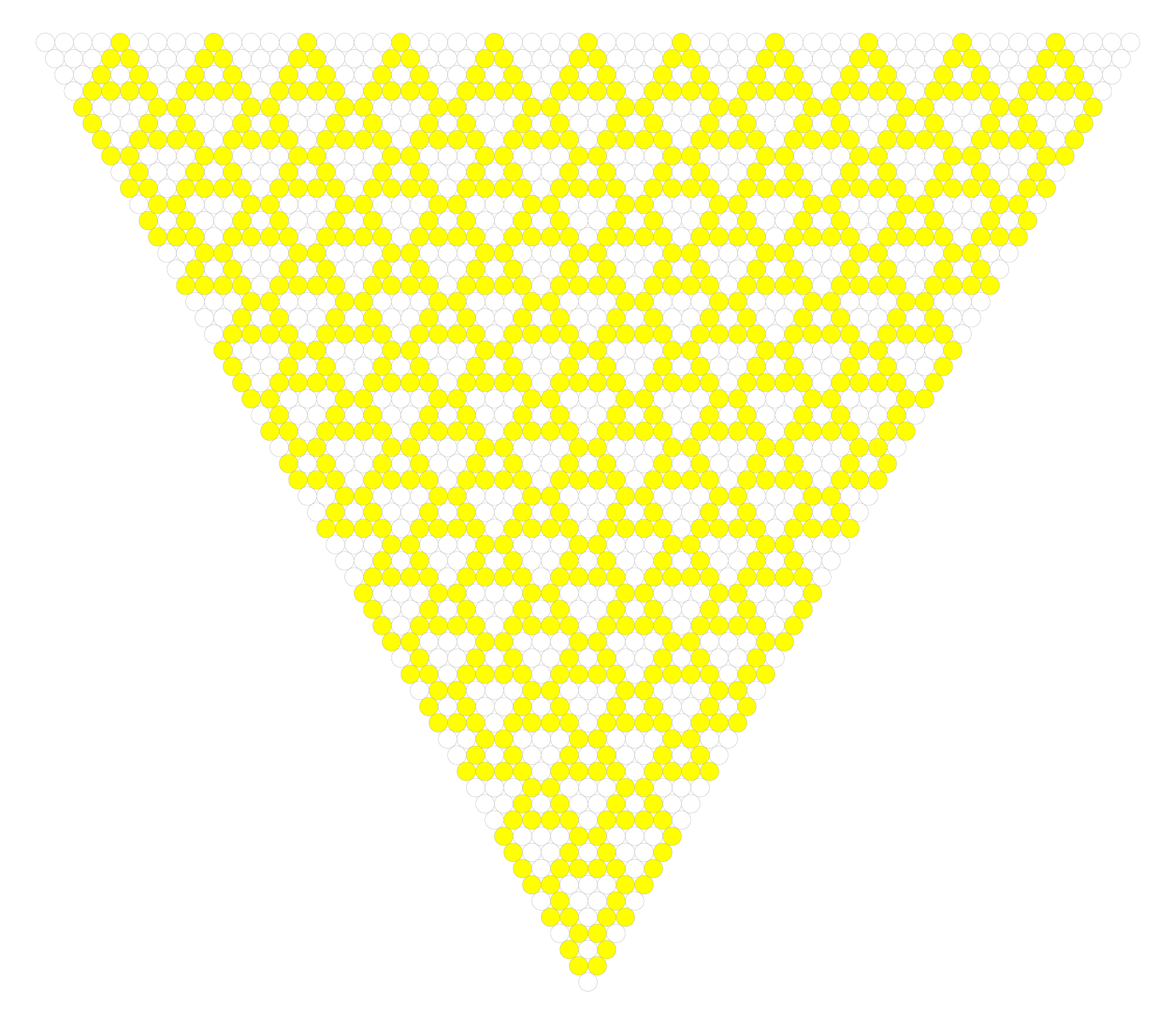}
 \label{FigTomo5P}
 }
 }
\caption{The tomographies of $T_{\NNs}(60)$ (left) and $T_{\PP}(60)$ (right) for $p=5$.
	  \vspace*{-.11em}  \\
     	  \mbox{\small Code of colors for the cells containing the powers of $p$, 
from $0,1,2,3$: \protect \Legend_0-3}	
}
 \label{FigTomo5}
 \end{figure}

Although the noise is transmitted till the west edge, it does not cover it completely. 
The beginning of sequences $W_\NNs$ and $W_\PP$ are:
{\small
\begin{equation*}
     \begin{split}
       W_{\NNs}(35)= \{&1, 2, 3, 6, 5, 15, 105, 70, 1, 5, 33, 55, 65, 273, 1001,1430, 17, 17,\\ 
		& 969, 4845, 1785, 6545, 37145, 81719, 17,1105, 3553,\\ 
                & 969969, 672945, 81345, 955049953, 66786710, 33, 561, 385\}
    \end{split}
\end{equation*}
}
and
{\small
\begin{equation*}
     \begin{split}
       W_{\PP}(35)= \{&1, 2, 3, 3, 5, 15, 105, 35, 3, 15, 11, 165, 195, 91, 3003, 2145, 17, 51,\\ 
		& 969, 1615, 1785, 19635, 
37145, 245157, 255, 221, 53295,\\ 
                &  4849845, 44863, 16269, 14325749295, 6678671, 33, 561, 385\}\,.
    \end{split}
\end{equation*}
}
They are equal in $13$ places, at terms of indices $1, 2, 3, 5, 6, 7, 17, 19, 21, 23, 33, 34, 35$.
As far as we can check, this semblance remains valid, suggesting a general behavior.
Even in places where they differ, the terms are very close, both in size and in arithmetic 
structure. 
Compare Figures~\ref{FigLoN} and \ref{FigLoP} to see more similarities of sequences 
$W_\NN$ and $W_\PP$.

\begin{figure}[ht]
 \centering
 \mbox{
 \subfigure{
    \includegraphics[width=0.48\textwidth]{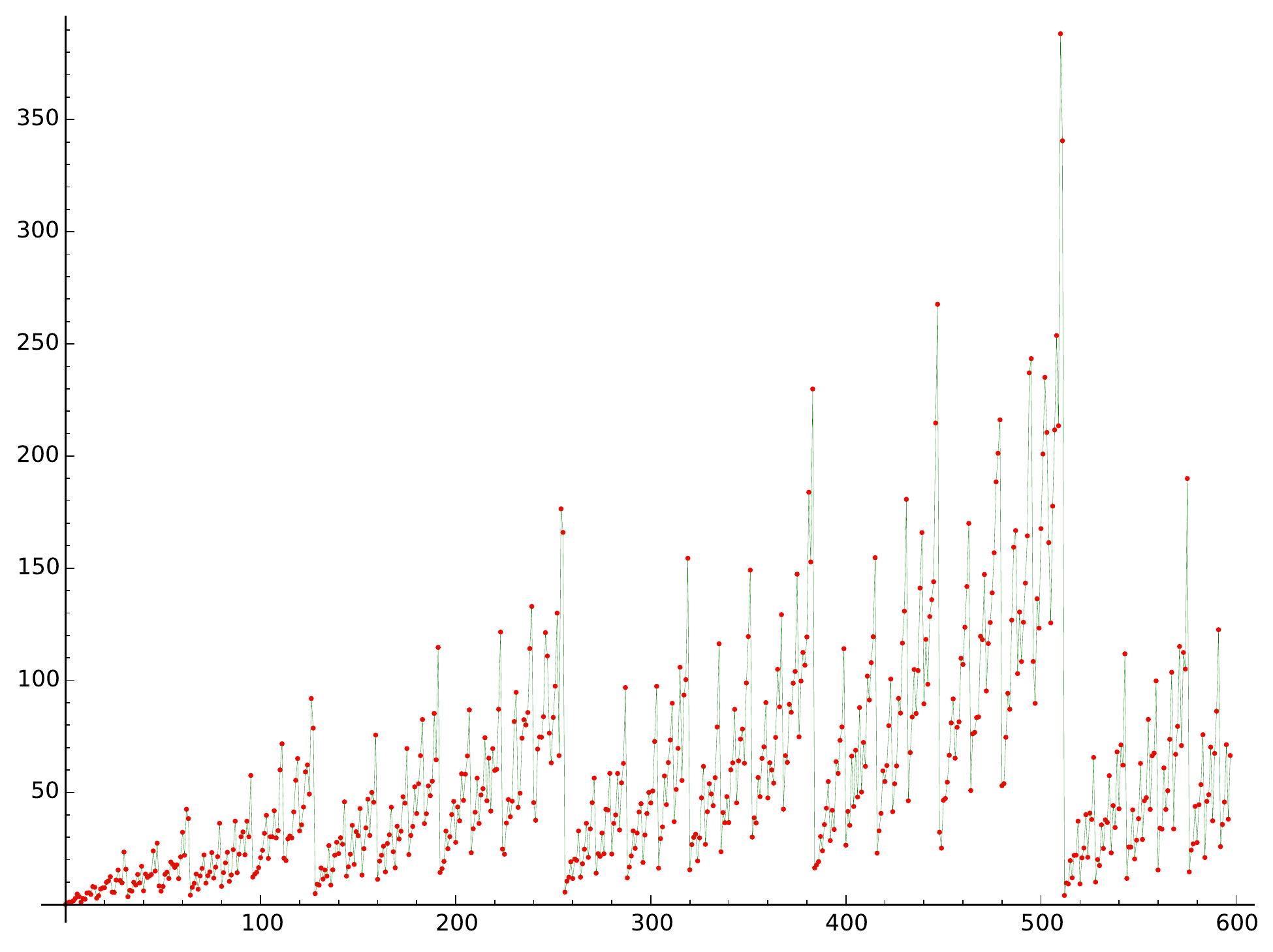}
 \label{FigLogP}
 }
 \subfigure{
    \includegraphics[width=0.48\textwidth]{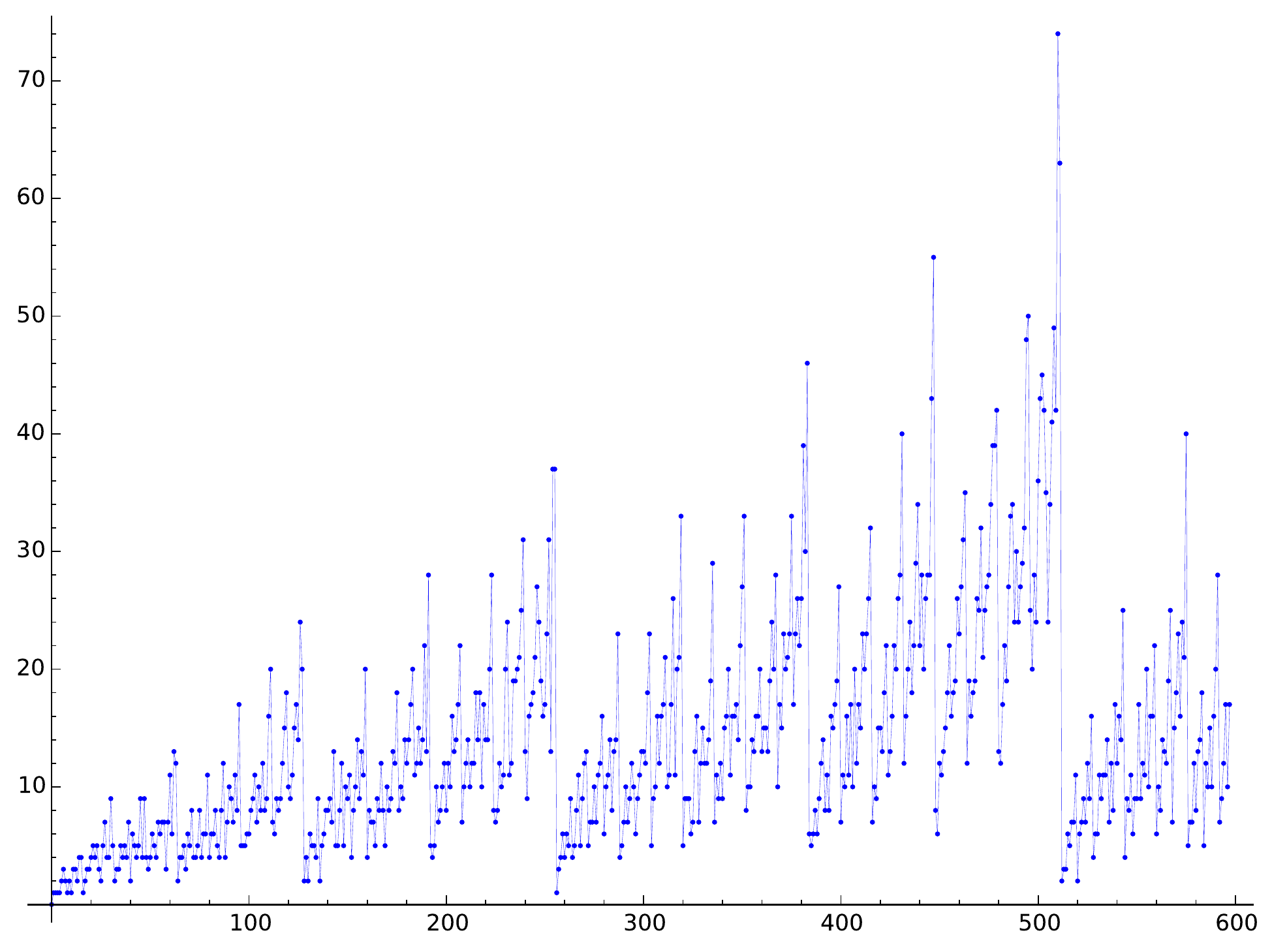}
 \label{FigomegaP}
 }
 }
 \caption{Comparison of the size and structure of the sequence $W_{\PP}(m), m\ge 1$:\\
     \texttt{Left:} the graph of $\log(W_{\PP}(m))$; \ \ \texttt{Right:} the graph of
$\omega(W_{\PP}(m))$. 
}
 \label{FigLoP}
 \end{figure}
\begin{figure}[ht]
 \centering
 \mbox{
 \subfigure{
    \includegraphics[width=0.48\textwidth]{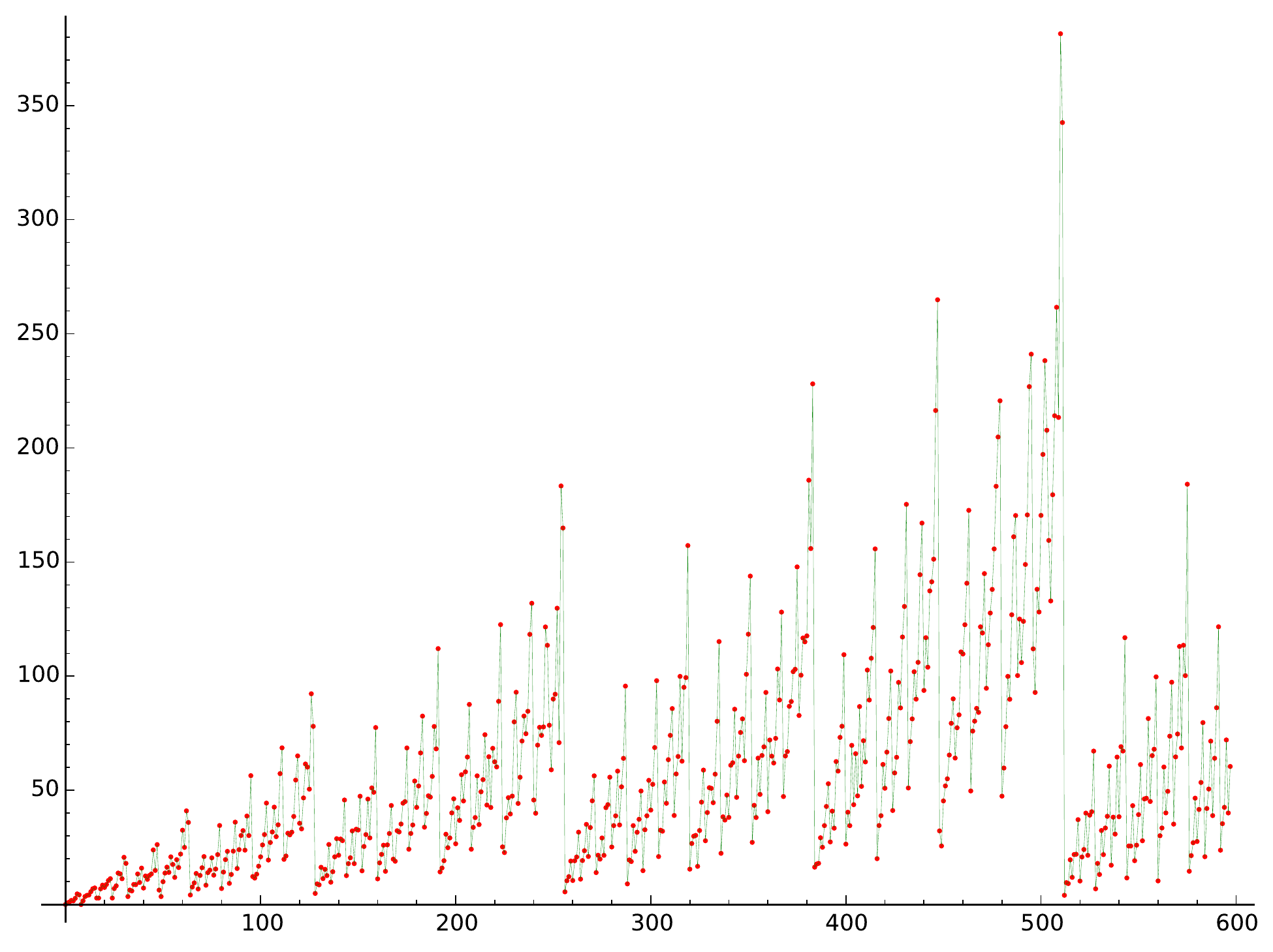}
 \label{FigLogN}
 }
 \subfigure{
    \includegraphics[width=0.48\textwidth]{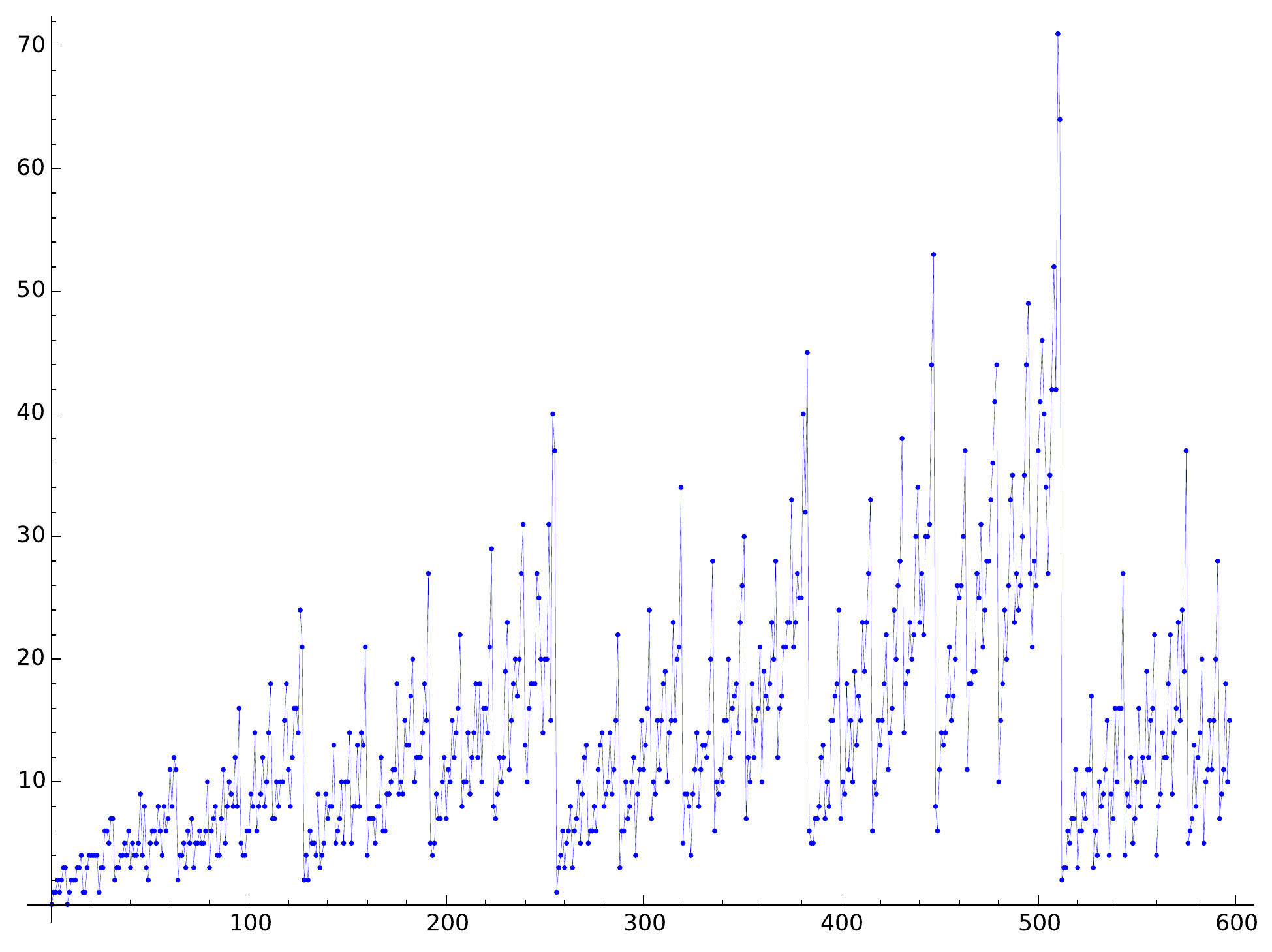}
 \label{FigomegaN}
 }
 }
 \caption{Comparison of the size and structure of the sequence $W_{\NNs}(m), m\ge 1$:\\
     \texttt{Left:} the graph of $\log(W_{\NNs}(m))$; \ \ \texttt{Right:} the graph of
$\omega(W_{\NNs}(m))$. 
}
 \label{FigLoN}
 \end{figure}

More precisely, the closeness between the two sequences can be measured by the surplus number of 
prime factors between $W_{\NNs}(m)$ or $ W_{\PP}(m)$ and their greatest common divisor,
$G(m):=\gcd(W_{\NNs}(m),W_{\PP}(m))$, $m\ge 1$. For this, the appropriate counting functions are
\begin{equation*}
	\begin{split}
	s_\NNs(f;K) &=\# \{1\le m\le K : \omega\big(W_{\NNs}(m)/G(m)\big)=f\}\,,\\
	s_\PP(f;K) &=\# \{1\le m\le K : \omega\big(W_{\PP}(m)/G(m)\big)=f\}\,.
	\end{split}
\end{equation*}
In Table~\ref{TableSurplus} we counted the number of integers $m$ for which the surplus occurs.
Notice that if $m\le 1024$, the largest surplus is $7$. This is small when compared with the 
maximum values of $\omega(W_\NNs(m))$ and $\omega(W_\PP(m))$ in this range, which are equal to 
$\omega(W_\NNs(1023))=130$ and $\omega(W_\PP(1023))=132$.

\begin{table}[!htbp]
\small
\centering
\caption{The surplus counting functions of $W_\NNs$ and $W_\PP$.}\label{TableSurplus}
$
\begin{array}{ccccccccccc}
\toprule
f & 0& 1& 2& 3& 4& 5& 6& 7& 8& 9\\
	\midrule
s_\NNs(f;1024) & 391& 311& 183& 77& 41& 14& 5& 2& 0& 0\\
s_\PP(f;1024) & 353& 391& 186& 74& 11& 6& 3& 0& 0& 0\\
\bottomrule
\end{array}
$
\end{table} 

Equality between $W_{\NNs}(m)$ and $W_{\PP}(m)$ for $m\le 1024$ occurs $149$ times.
We also mention that even in a larger range, integers $m$ for which $W_{\NNs}(m)=W_{\PP}(m)$ tend 
to appear in clusters, often grouping a varying number of consecutive numbers.

\subsection{Primes dividing the maximal values of $W_\NNs(m)$}
An intricate pattern of the sets of primes that divide the larger values of $W_\NNs(m)$ around 
$m=2^g$ develops as $g$  increases. Let us see a typical example, the case $g=8$. To emphasize the 
presence or absence and the position of prime divisors in the list all primes $\le m$, we have 
listed them all, but in two distinguished ways. Thus $W_\NNs(m)$ is the product of primes written 
in normal 
font, while the primes that do not 
divide $W_\NNs(m)$ are shown in red color (in the electronic form) smaller font. Thus, we have:
\begin{equation*}
	\begin{split}
	W_\NNs(255) :
		\ &\rs{2}, 3, \rs{5}, 7, 11, 13, \rs{17}, 19, \rs{23, 29, 31}, 37, 41, 
43, 47, \rs{53, 59, 61, 67, 71, 73, 79, 83},\\ 
	& 89, 97, 101, 103, 107, 109, 113, 127, 131, 137, 139, 149, 151, 157, 163, 167,\\
		&173, 179, 181, 191, 193, 197, 199, 211, 223, 227, 229, 233, 239, 241, 251;\\[1.3ex]
	W_\NNs(256):\ &	
	2, \rs{3}, 5, 7, 11, \rs{13}, 17, 19, 23, \rs{29, 31, 37, 41}, 43, 47, \rs{53, 59, 61}, 67, 71, 
73, 79, 83,\\
	& \rs{89, 97, 101, 103, 107, 109, 113, 127}, 131, 137, 139, 149, 151, 157, 163, 167,\\
		&173, 179, 181, 191, 193, 197, 199, 211, 223, 227, 229, 233, 239, 241, 251\,,
	\end{split}
\end{equation*}
so $\omega(W_\NNs(255))=40$ and $\omega(W_\NNs(256))=37$.

\subsection{A problem of Sloane}\label{SloaneProblem}
\renewcommand{\thefootnote}{$\ddagger$} 
N. J. A. Sloane~\cite[A222313]{OEIS}, \cite[Question 3]{CZ2014} orders increasingly and eliminates 
duplicates from the terms of $W_\NNs$ and obtains sequence\footnote{Considering only the first 
five hundred terms, Sloane missed $W_\NNs(1025)=41$ and $W_\NNs(513)=57$ from the list of terms of 
size smaller than $100$.}
\begin{equation}\label{eqOrderedNNs}
	\begin{split}
UO(W_\NNs) : \ 	&1, 2, 3, 5, 6, 15, 17, 33, 41, 55, 57, 65, 70, 105, 129, \\
	& 257, 273, 385, 561, 897, 969, 1001,\dots
	\end{split}
\end{equation}
He asks if the first part of the list contains all numbers $\le 100$ that appear in this 
sequence. The numbers from \eqref{eqOrderedNNs} are obtained from the first $8200$ terms of 
$W_\NNs$.

Examining the terms, we observed a general formula for a numbers that make a big jump in the 
beginning, during the process of ordering.
\begin{conjecture}\label{Conjecture3}
For any integer $g\ge 0$, we have:
	\begin{equation*}
	\begin{split}
	W_\NNs(2^g+1)=
	\begin{cases}
	     2^g+1, &\text{ if\ \ $2^g+1$ is square free}\\
			 (2^g+1)/D_g, &\text{ else,}\\  
	\end{cases}
	\end{split}
\end{equation*}
where $D_g$ is the largest square that divides $2^g+1$.     
\end{conjecture}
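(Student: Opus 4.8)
The plan is to pass to the exponent level and treat each prime separately. Since $T_\NNs=\prod_p T_{\NNs,p}$ and the $Z$-rule acts on valuations as the absolute-difference operator $\alpha$, the west value is $v_p\big(W_\NNs(2^g+1)\big)=\alpha^{(2^g)}\big(v_p(\NNs)\big)(1)$, the apex at column one of the absolute-difference triangle built on the initial data $b_n:=v_p(n)$, $1\le n\le 2^g+1$. Writing $2^g+1=\prod_p p^{a_p}$ and $D_g=\prod_p p^{2\lfloor a_p/2\rfloor}$, one checks that both branches of the conjecture equal $\prod_{a_p\text{ odd}}p$, so the statement to be proved is exactly
\begin{equation*}
 v_p\big(W_\NNs(2^g+1)\big)=a_p\bmod 2\in\{0,1\}\qquad\text{for every prime }p .
\end{equation*}

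First I would settle the parity. Since $\alpha\equiv{}+\pmod 2$, one application of the rule coincides over $\FF_2$ with the operator growing a Pascal triangle, exactly as in Proposition~\ref{PropositionCommutes}; hence, modulo $2$, the apex equals $\sum_{j=0}^{2^g}\binom{2^g}{j}\,b_{1+j}$. By Lucas' theorem the binomials $\binom{2^g}{j}$ vanish mod $2$ for $0<j<2^g$, so only the two endpoints survive and
\begin{equation*}
 v_p\big(W_\NNs(2^g+1)\big)\equiv b_1+b_{2^g+1}=v_p(1)+v_p(2^g+1)\equiv a_p\pmod 2 .
\end{equation*}
This already forces the correct parity for every prime: a prime with $a_p$ odd divides the west value, while one with $a_p$ even occurs there to an even power. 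The case $g=0$ is immediate, since then $\binom10=\binom11=1$ and the apex is literally $v_p(2)\le 1$.

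The entire remaining content is therefore the \emph{upper bound} $v_p\big(W_\NNs(2^g+1)\big)\le 1$; combined with the parity it pins the value to $a_p\bmod 2$. This is the row-$(2^g+1)$, column-one instance of the square-free west-edge conjecture, equivalently the assertion that no soliton $\fS(p,g')$ reaches column one in that row (compare Corollary~\ref{Corolarry1}, which does this outright for $p=2$). The approach I would take is to imitate the complete $p=2$ analysis of Theorem~\ref{TheoremTNNsdoi}: exploit the self-similarity $b_{pn}=1+b_n$ of $v_p(\NNs)$ to decompose $v_p(T_\NNs)$ into Pascal-type triangles over the alphabet of $p$-powers, then prove inductively that the fringes carrying exponents $\ge 2$ stay a positive distance from column one precisely at the rows $2^g+1$. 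The naive range bound is useless, since the input valuations reach size of order $g/\log_2 p$ and the absolute differences must produce massive cancellation to collapse them to $\le 1$; a signed-sum surrogate does not help either, because the absolute-value map replaces the signed apex by a sign-reduced value that can still equal it (for instance $\alpha^{(2)}(0,0,2)(1)=2$), so ruling out such configurations inside $v_p(\NNs)$ is genuinely global.

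The main obstacle is exactly this upper bound. For $p=2$ the tomography is an honest Sierpinski gasket, and the argument of Theorem~\ref{TheoremTNNsdoi} controls the west edge completely; for odd $p$ the solitons are only ``fringed'' Sierpinski figures with no comparably clean description, and the period dividing $2^{\ind_p(2)}-1$ that governs the square-free analogue $T_\PP$ in Theorem~\ref{TheoremPeriodicity} is organized in base $2$, whereas the valuation data is organized in base $p$. Reconciling these two incommensurate structures—showing that the base-$p$ ``noise'' never deposits a $p^2$ on column one at the base-$2$--special rows $2^g+1$—is where I expect the real difficulty to lie, and it is presumably why the statement remains conjectural. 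A plausible first target is the bound for primes with $a_p\le 2$ (ruling out a single $p^2\parallel 2^g+1$ from surviving), where the layer decomposition $v_p(\NNs)=\sum_{k\ge1}\cA_{p^k}$ involves only two interacting layers and the carry analysis is most tractable.
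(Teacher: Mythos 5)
The statement you are asked to prove is, in the paper, precisely that --- a conjecture. The authors give no proof: they only verify it for $g\le 13$ by direct computation and support it heuristically via the $\FF_2[[X]]$ analysis of the soliton-free matrix $T_\PP$ (Subsection~\ref{Subsection3}, where $(1+X)^{2^g}=1+X^{2^g}$ forces the minima of $W_\PP$ at the rows $2^g+1$). So there is no paper proof to match your attempt against. What you have written is an honest conditional argument, and its correct portion actually goes beyond what the paper records. Your reformulation of both branches as $\prod_{a_p\ \mathrm{odd}}p$ is right, and the parity computation is sound: modulo $2$ the absolute difference is $\FF_2$-addition, the apex of the depth-$2^g$ triangle is $\sum_j\binom{2^g}{j}b_{1+j}$, and Lucas kills all but the endpoints, giving $v_p\big(W_\NNs(2^g+1)\big)\equiv v_p(2^g+1)\pmod 2$. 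This is exactly the base-$2$ mechanism the paper exploits for $T_\PP$, transplanted to $T_\NNs$, and it cleanly exhibits Conjecture~\ref{Conjecture3} as a consequence of the square-free west-edge conjecture of \cite{CZ2013} stated earlier in the paper.

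The gap is the one you name yourself: the upper bound $v_p\big(W_\NNs(2^g+1)\big)\le 1$, i.e.\ that no soliton $\fS(p,g')$ with $p$ odd reaches column one at row $2^g+1$. Nothing in the paper supplies this --- Theorem~\ref{TheoremTNNsdoi} and Corollary~\ref{Corolarry2} handle only $p=2$, and Theorem~\ref{TheoremPeriodicity} concerns $T_\PP$, where the exponents never exceed $1$ to begin with, so its periodicity says nothing about the higher layers $\cA_{p^k}$, $k\ge 2$, of $v_p(\NNs)$. Your diagnosis of why the difficulty is genuine (absolute values destroy the linear $\FF_2$ structure above exponent $1$, and the base-$p$ layering of the valuations is incommensurate with the base-$2$ Pascal combinatorics) is accurate. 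In short: your proposal is a correct reduction plus a correct half of the statement, not a proof; the remaining half is an open problem that the paper also leaves open, so you should present the parity identity as a proposition and the upper bound as an explicit hypothesis rather than as steps of a proof.
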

For small ranks, if $g\le 13$, Conjecture~\ref{Conjecture3} verifies, since 
$2^g+1$ is prime, for $g=1, 2, 4, 8$, or a product of two distinct primes, for 
$g= 5, 6, 7, 11, 12, 13$, and $W_\NNs(2^g+1)=2^g+1$ in these cases.
For the remaining values, 
we have:
	$W_\NNs(2^3+1)=W_\NNs(3^2)=1$;
	$W_\NNs(2^8+1)=W_\NNs(3^3 \cdot 19)=57$;	
	$W_\NNs(2^{10}+1)=W_\NNs(5^2 \cdot 41)=41$.

Verifying the decomposition of a few hundred more numbers of the form $2^g+1$ and assuming, 
confer the above discussion, that the smallest local minimums of $W_\NNs$ are attained at these 
ranks, we should expect a positive answer to Sloane's question.

\begin{paradoxproblem}\label{Paradox2}
Explain the peculiarity: why, given that the towers of the $p$-tomographies are 
higher when starting with the initial generation $\NNs$ instead of $\PP$, more non-zero cells 
appear 
on the western edge in the second case. For example, counting only terms less than $1000$, we find 
$27$ terms in $OU(W_\PP)$ and $21$ terms in $OU(W_\NNs)$. 
\end{paradoxproblem}

\section*{Acknowledgement}
All calculations and images presented in this work were made using the free open-source mathematical
software system~\cite{SAGE}.

\linespread{1.04}

\end{document}